\DeclareSymbolFont{rsfs}{U}{rsfs}{m}{n}
\DeclareSymbolFontAlphabet{\mathscrsfs}{rsfs}
\newtheorem{theorem}{Theorem}
\newtheorem{definition}[theorem]{Definition}
\newtheorem{lemma}[theorem]{Lemma}
\newtheorem{corollary}[theorem]{Corollary}
\newtheorem{remark}{Remark}
\renewcommand{\P}{\operatorname{\mathbb{P}}}
\newcommand{\E}{\operatorname{\mathbb{E}}}
\newcommand{\R}{\mathbb{R}}
\newcommand{\pr}{\mathbb{P}}
\newcommand{\indi}{\mathds{1}}
\newcommand{\rmd}{\mathrm{d}}
\renewcommand{\hat}{\widehat}
\def\Law{{\rm Law}}
\newcommand{\bG}{G}
\newcommand{\bg}{g}
\newcommand{\cF}{{\mathcal F}}
\newcommand{\Unif}{{\sf Unif}}
\newcommand{\salg}{\mbox{\rm\tiny alg}}
\newcommand{\eps}{\varepsilon}
\newcommand{\pl}{\mbox{\rm\tiny pl}}
\newcommand{\rd}{\mbox{\rm\tiny rd}}
\def\bx{x}
\def\by{y}
\def\b0{{\boldsymbol{0}}}
\def\ALG{{\sf ALG}}
\def\sSAT{\mbox{\tiny \sf SAT}}
\def\sALG{\mbox{\tiny \sf ALG}}
\def\Law{{\rm Law}}
\def\<{{\langle}}
\def\>{{\rangle}}
\def\cC{\mathcal{C}}
\numberwithin{equation}{section}
\numberwithin{theorem}{section}
\begin{document}

\title{Hardness of sampling solutions from the Symmetric Binary Perceptron}

\author{Ahmed El Alaoui\thanks{Department of Statistics and Data Science, Cornell University. Email: elalaoui@cornell.edu}, \;\; David Gamarnik \thanks{Sloan School of Management, Massachusetts Institute of Technology. Email: gamarnik@mit.edu}}

\date{}
\maketitle

\begin{abstract}
We show that two related classes of algorithms, stable algorithms and  Boolean circuits with bounded depth, cannot produce an approximate sample from the uniform measure over the set of solutions to the symmetric binary perceptron model at any constraint-to-variable density. This result is in contrast to the question of finding \emph{a} solution to the same problem, where efficient (and stable) algorithms are known to  succeed at sufficiently low density. This result suggests that the solutions found efficiently---whenever this task is possible---must be highly atypical, and therefore provides an example of a problem where search is efficiently possible but approximate sampling from the set of solutions is not, at least within these two classes of algorithms.
\end{abstract}


\section{Introduction}\label{sec:1}
We consider the problem of approximately sampling from the uniform distribution over the set of solutions of the  Symmetric Binary Perceptron (SBP) model. The model is described
as follows.
Let $\bG = (\bg_a)_{a=1}^m$ be $m$ independent random vectors in $\R^n$ defined over a probability space $(\Omega,\cF,\P)$, and let $\alpha, \kappa > 0$. We let $m = \lfloor \alpha n \rfloor$ and denote by $S(\bG, \kappa)$ the set of binary solutions $\bx \in \{-1,+1\}^n$ to the system of inequalities
\begin{equation}
\big| \langle \bg_a , \bx \rangle \big| \le \kappa \sqrt{n} ~~~ \mbox{for all}~ 1 \le a \le m\, .
\end{equation}
We consider two distributions over the input $\bG$: the Gaussian case 
where each entry is distributed independently according to $N(0,1)$, 
and the Rademacher case where each entry is an independent symmetric 
random sign. In both cases (in fact under
more general distributional assumptions) a sharp and simple characterization of the 
probability that $S(\bG, \kappa)\ne\emptyset$ 
was recently derived in \cite{abbe2022proof,perkins2021frozen}. 
For every $\kappa>0$,
this set is non-empty when $\alpha<\alpha_{\sSAT}(\kappa) := -(\log 2)/\log \P(|Z|\le \kappa)$ and empty when $\alpha>\alpha_{\sSAT}(\kappa)$, both with high probability (whp)
as $n\to\infty$. Here $Z$ denotes a standard Gaussian random variable.

It was further established in~\cite{abbe2022proof,perkins2021frozen} that the set of solutions
exhibits the following interesting geometric property: For every $\kappa>0$ at any positive constraint-to-variable density $\alpha>0$
the space of solutions consists mostly (i.e., with exponentially small size exception) 
of isolated points separated by linear 
distance. 
Specifically, for every $\alpha>0$, whp as $n\to\infty$
there exists a subset $S_0(\bG, \kappa)\subset S(\bG, \kappa)$, such that 
(a) for every $\bx\ne\by\in S_0(\bG, \kappa)$, $d_{H}(\bx,\by)=\Omega(n)$ and
(b) $|S_0^c(\bG, \kappa)|/|S(\bG, \kappa)|\le\exp(-\Omega(n))$. Here $A^c$ denotes
the set-theoretic complement of $A$.

Based on similarities to other randomly generated constraint satisfaction 
problems, such as the random K-SAT problem~\cite{AchlioptasCojaOghlanRicciTersenghi}, this  suggests
an algorithmic difficulty of finding at least one satisfying solution. 
Surprisingly, this is not the case and for every $\kappa>0$ at small enough density $\alpha < \alpha_{\sALG}(\kappa) <\alpha_{\sSAT}(\kappa)$, polynomial time algorithms for finding
 solutions whp as $n\to\infty$ exist~\cite{bansal2020line,abbe2022binary}. 
This apparent discrepancy was explained in~\cite{gamarnik2022algorithms} where it was shown that a provable barrier to algorithms finding 
a solution appears in the form of an Overlap Gap Property (OGP), which occurs 
near $\alpha_{\sALG}(\kappa)$ when $\kappa$ is small. 

In this paper we consider the question of whether it is possible to efficiently generate \emph{typical} solutions from $S(\bG,\kappa)$. 
In particular whether it is possible to sample
from it nearly uniformly at random. 
We provide a strong evidence that the answer
is `no' for all positive densities $\alpha>0$. Specifically, for two
classes of algorithms we describe below, we show that the distribution produced
by these algorithms deviates from the uniform distribution on $S(\bG, \kappa)$
by a significant margin, and thus such algorithms fail
to sample nearly uniformly from the set of solutions. 
This is the main result of our paper. One interpretation of this result is that the solutions produced by efficient algorithms as previously mentioned must be highly atypical.        

The two algorithmic classes we consider are (a) stable algorithms (appropriately
defined) and (b) a related class of bounded depth  Boolean circuits. Stable algorithms are loosely speaking
algorithms which have low noise sensitivity. Specifically, small changes in the input
matrix $\bG$ results in a small change in the output of the algorithm. This 
is quantified more precisely in Section~\ref{section:Stable-circuits}.
Many algorithms which find solutions in random constraint satisfaction
problems are known to be stable~\cite{gamarnik2021survey} and some
are known to be effective for approximate sampling, as was shown recently
in~\cite{alaoui2022sampling,alaoui2023samplingpspin,huang2024sampling}
in the context of mean-field spin glass models.

As an illustration of stability in the context of sampling, we mention $(i)$ \emph{sequential sampling} where $x_1$ is sampled from its marginal, then for every $i \ge 2$, $x_i$ is sampled from (an approximation of) its conditional distribution given $x_1,\cdots,x_{i-1}$, and $(ii)$ the class of \emph{score-based diffusion algorithms}~\cite{song2019generative} or algorithms based on discretizations of the \emph{stochastic localization process}~\cite{chen2022localization} which are in a certain sense continuous-time versions of the sequential sampling procedure; see~\cite{chen2022localization,montanari2023sampling} for an exposition of a common framework, as well as other sampling schemes from the same family. Such algorithms can be shown to be stable as long as the subroutine estimating the sequence of conditional distributions in the case $(i)$ or for approximating the `denoising' or `score' function in the case $(ii)$ is suitably Lipschitz as a function of its input.           

As we mentioned earlier, when one is interested in search rather than sampling, i.e., one is only interested in finding a solution, and not necessarily a  typical one, then OGP can be used to argue impossibility of the search task by stable algorithms. 
Another class of algorithms ruled out by OGP in the context of the search problem is our (b) example, namely bounded
depth polynomial size Boolean circuits. This class of algorithms exhibits a form of `average' stability, in contrast to the usual definition of stability used to argue against search algorithms, which is uniform in the input.  
Technically, this comes in the form of the celebrated Linial-Mansour-Nisan (LMN) Theorem~\cite{o2014analysis} on the concentration of the Fourier spectrum of a Boolean circuit on its low degree projection.
This average notion turns out to be sufficient for our sampling problem, and we use it to rule out two special classes of Boolean circuits:
polynomial size circuits 
with depth $c\log n/\log\log n$
for small enough constant $c$,
which depends on $\kappa$ and $\alpha$, and circuits with bounded
depth and size
at most $\exp(O(n^c))$,
where $c$ again depends on $\kappa$ and $\alpha$.

Our proof approach is similar to the one of \cite{alaoui2022sampling}
and is based on establishing that the model exhibits the so-called
\emph{Transport Disorder Chaos} (TDC). Roughly speaking this property says 
that the measure is unstable to small perturbation to the input $\bG$ when
a transport (Wasserstein-2) distance is used between measures.
The proof of the TDC property is somewhat technical and uses an associated planted model; an instrumental tool previously used in deriving
the satisfiability threshold $\alpha_{\sSAT}(\kappa)$ 
in~\cite{abbe2022proof,perkins2021frozen}. TDC implies failure
of stable algorithms via a simple triangle inequality. On the other hand we show that Boolean circuits are stable via the LMN Theorem. 
Part of this argument is similar to the one found
in~\cite{gamarnik2020optimization} where it was used to show the limitation of 
circuits for solving  search problems.


While our result  rules out only certain classes of 
sampling algorithms, we believe that the hardness of this problem is genuine, and in 
fact no polynomial time algorithms for sampling exist for this problem at any
density $\alpha>0$. Needless to say, this is not within a near reach as such a 
result  would imply separation between $P$ and $\#P$ classes of algorithms. 
An interesting extension of this work would be showing a failure of broader
classes of algorithm. We note
that a canonical choice of sampling technique, namely Monte Carlo Markov Chain/Glauber dynamics is not a viable 
option here since the solution space is not connected and as such is not ergodic. 
It seems plausible though to achieve sampling using Simulated Annealing (SA) technique. We believe though that the time for SA to 
reach steady-state is exponentially large 
and leave it as an open question. 

Finally, another very interesting question which remains open is whether it is 
possible to find any solution in the bulk $S_0(\bG, \kappa)$ (the set of isolated solutions) at least 
within the same classes of algorithms (stable or low-depth circuits). We suspect
that the answer is `no', but our current proof technique does not extend to this result.
We leave it as a very interesting open problem.

\section{Instability of the set of solutions}
In this section we establish a certain structural property for this uniform measure called Transport Disorder Chaos in~\cite{alaoui2023shattering}, and then show that this presents a barrier to approximate sampling for the family of stable algorithms and for Boolean circuits of bounded depth and/or bounded size. At a high level, TDC is the property that the measure is unstable to small perturbations of the input data, in the sense that it moves a substantial fraction of its mass to a linear distance away from its initial location. Conversely, stable algorithms are insensitive to such small perturbations to their input, and are thus unable to produce a typical sample from the measure. 
We will define the notion of perturbation differently depending on the distribution of the input:

\begin{enumerate}
\item \label{it:g} {\bf The Gaussian case:} If $\bG$ has independent standard normal entries, we consider a perturbation of $\bG$ defined by the interpolation 
\begin{equation*} 
\bG_t = t \, \bG + \sqrt{1-t^2} \, \bG',~~~ t\in [0,1] \, ,
\end{equation*}
where $\bG'$ is an independent copy of $\bG$.
\item  \label{it:r} {\bf The Rademacher case:} If $\bG$ has independent random sign entries, we instead construct $\bG_t$ by resampling each entry of $\bG$ independently with probability $1-t \in [0,1]$.
\end{enumerate}
In both cases, $\bG$ and $\bG_t$ have marginally the same law and are correlated entrywise with correlation $t$. 
Next, we measure displacement of mass due to perturbation via the Wasserstein-2 distance defined as 
\begin{equation*}  
W_{2}(\mu,\nu)^2 =  \inf_{\pi \in \cC(\mu,\nu)} 
\E_{\pi} \Big[\big\|X - Y\big\|_2^2\Big] \, , 
\end{equation*}   
where $\mu , \nu$ are any two probability 
measures on $\R^n$ with finite second moments, and the infimum is over all couplings $(X,Y) \sim \pi$ with marginals $X \sim \mu$ and $Y \sim \nu$. 
Finally, for a finite set $S$, $\Unif(S)$ denotes the uniform distribution over $S$.

We now state our main structural result that $\Unif\big(S(\bG,\kappa)\big)$ has transport disorder chaos: 
\begin{theorem} \label{thm:w_2-chaos} 
Let $\bG$ be either drawn from the Gaussian or Rademacher distribution and let $\bG_t$ be defined accordingly. Assume that $\kappa>0$ and $\alpha < \alpha_{\sSAT}(\kappa)$.    
Let $\mu_{\bG} = \Unif\big(S(\bG,\kappa)\big)$ if $S(\bG,\kappa) \neq \emptyset$, and define $\mu_{\bG}$ arbitrarily otherwise.  
Then there exists $\delta, C>0$ and $t_0 \in [0,1)$, all depending on $\alpha, \kappa$ such that for any sequence $t_n\in [t_0,1)$ satisfying
\begin{itemize}
\item $n \sqrt{1-t_n} \ge C \log n$  in the Gaussian disorder case, or
\item $\sqrt{n(1-t_n)} \ge C \log n$ in the Rademacher disorder case,
\end{itemize} 
we have
\begin{equation}
\lim_{n \to \infty} \, \P\Big( W_{2}\big(\mu_{\bG} , \mu_{\bG_{t_n}}\big) \ge  \sqrt{\delta n}   \Big) = 1 \, .
\end{equation}
\end{theorem}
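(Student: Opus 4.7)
The plan is to show that a typical solution $\bx \in S(\bG, \kappa)$ has no perturbed solution $\by \in S(\bG_t, \kappa)$ within Euclidean distance $\sqrt{\delta n}$; combined with a simple optimal-transport argument this yields the Wasserstein lower bound. The key analytic input is the boundary-layer expansion
\[
p(1, \kappa) - p(\rho, \kappa) \;=\; \Theta\bigl(\sqrt{1 - \rho}\bigr) \qquad \text{as } \rho \to 1^{-},
\]
where $p(\rho, \kappa) := \P(|Z_1| \le \kappa,\, |Z_2| \le \kappa)$ for jointly standard Gaussians $(Z_1, Z_2)$ with correlation $\rho$.

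Fix $\bx \in \{-1, +1\}^n$ and work under the \emph{planted} law $\P_{\mathrm{pl}}^{\bx}$ in which $\bG$ is conditioned on $\bx \in S(\bG, \kappa)$, with $\bG_t$ generated from $\bG$ as in the theorem. For any $\by$ with overlap $\langle \bx, \by\rangle = qn$ and any row index $a$, the pair $(\langle \bg_a, \bx\rangle, \langle \bg_a^{(t)}, \by\rangle)/\sqrt{n}$ is jointly centered Gaussian with unit variances and correlation $tq$: exactly in the Gaussian case, and via a local CLT in the Rademacher case. Hence $\P_{\mathrm{pl}}^{\bx}(\by \in S(\bG_t, \kappa)) = \bigl(p(tq, \kappa)/p(1, \kappa)\bigr)^m$, and the expected number of perturbed solutions $\by$ with overlap $q = 1 - u \ge 1 - \delta/2$ equals
\[
\sum_{u \in \{0, 2/n, \ldots, \delta/2\}} \binom{n}{un/2}\Big(\frac{p(t(1-u), \kappa)}{p(1,\kappa)}\Big)^m.
\]
Using $\binom{n}{un/2} \le \exp(n(u/2)\log(2e/u))$ and the expansion of $p$, the $u$-th term is at most $\exp\bigl(n[(u/2)\log(2e/u) - c(\kappa)\alpha\sqrt{(1-t) + u}]\bigr)$. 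Choosing $\delta$ small enough (depending on $\alpha, \kappa$) so that the entropic term is dominated by the perturbation term uniformly in $u \in [0, \delta/2]$, this exponent is at most $-c'(\kappa)\alpha n\sqrt{1-t}$. Under the hypothesis $n\sqrt{1-t_n} \ge C\log n$, each term is then $\le n^{-c'\alpha C}$, and the whole sum is $\le n^{-K}$ for any prescribed $K$, provided $C = C(K, \alpha, \kappa)$ is taken large enough.

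Let $\mathrm{bad}(\bG, \bG_t) := \{\bx \in S(\bG, \kappa) : \exists \by \in S(\bG_t, \kappa) \text{ with } \|\bx - \by\|^2 \le \delta n\}$. By the symmetric planted identity $\E|\mathrm{bad}| = \P_{\mathrm{pl}}(\bx \in \mathrm{bad})\,\E|S(\bG, \kappa)|$ combined with the previous bound and Markov's inequality, $|\mathrm{bad}| \le n^{-K/2}\E|S(\bG,\kappa)|$ with high probability. Combined with a concentration lower bound $|S(\bG, \kappa)| \ge c(\alpha, \kappa)\E|S(\bG,\kappa)|$ with high probability --- a consequence of the second-moment and log-normal fluctuation analyses of \cite{abbe2022proof, perkins2021frozen} --- one obtains $\mu_{\bG}(\mathrm{bad}) = o(1)$ with high probability. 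Now for any coupling $\pi \in \cC(\mu_{\bG}, \mu_{\bG_t})$, every pair $(X, Y)$ with $\|X - Y\|^2 \le \delta n$ has $X \in \mathrm{bad}$, so the $\pi$-mass of such pairs is at most $\mu_{\bG}(\mathrm{bad})$; consequently $\E_\pi\|X-Y\|^2 \ge (1 - o(1))\delta n$ and $W_2(\mu_{\bG}, \mu_{\bG_t})^2 \ge \delta n/2$ with high probability, after relabeling $\delta$. The main technical obstacle is the Rademacher case, where a quantitative local CLT uniform in the pair $(\bx, \by)$ is needed to justify the bivariate Gaussian approximation underlying the computation above; this is also the source of the stronger perturbation requirement $\sqrt{n(1-t_n)} \ge C\log n$ (rather than $n\sqrt{1-t_n} \ge C\log n$ as in the Gaussian case), reflecting the granularity of the Rademacher support.
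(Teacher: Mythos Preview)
Your proposal is correct and follows essentially the same approach as the paper: a first-moment computation in the planted model, driven by the boundary-layer estimate $p(\kappa)-q_\kappa(\rho)\asymp\sqrt{1-\rho}$, followed by a transfer to the random model via the Perkins--Xu/Abb\'e--Li--Sly control on $|S(\bG,\kappa)|/\E|S(\bG,\kappa)|$, and then the coupling lower bound on $W_2$. The only organizational difference is that the paper, in the Gaussian case, isolates the term $\by=\bx$ and handles the remaining overlaps by monotonicity in $t$ plus a direct appeal to the Perkins--Xu isolation lemma, whereas you treat all overlaps $u\in[0,\delta/2]$ uniformly via the single exponent $n[(u/2)\log(2e/u)-c\alpha\sqrt{(1-t)+u}]$; your route is slightly more self-contained but otherwise equivalent, and your identification of the bivariate Berry--Esseen step as the main obstacle in the Rademacher case matches the paper's treatment.
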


\begin{remark}
The assumptions on $t_n$ essentially require that $t_n$ approaches $1$ not faster than order $(\log n/n)^2$ for the Gaussian disorder case,
and $(\log n)^2/n$ for the Rademacher disorder case. The weaker rate we obtain in the latter is due to the use of a central limit theorem where additional errors are incurred relative to the Gaussian case. We also note that the logarithmic term can be dispensed with, i.e., a sufficient condition should be $n\sqrt{1-t_n} \to \infty$, at least in the Gaussian case in the presence of a contiguity statement between two probability models as we describe below.
\end{remark}

\paragraph{The planted model} As a preliminary to the proof we introduce a \emph{planted} distribution on pairs $(\bG,\bG_t)$ and state a few useful results from~\cite{perkins2021frozen,abbe2022proof}. 
We first let $\P_{\rd}$ denote our \emph{null} or \emph{random} distribution on pairs $(\bG,\bG_t)$ as in the description in \ref{it:g} in the Gaussian case, or the description in \ref{it:r} in the Rademacher case. Next we let $\P_{\pl}$ be the joint distribution on pairs $(\bG,\bG_t)$ defined by the density 
\begin{equation}\label{eq:lr}
\frac{\rmd \P_{\pl}}{\rmd \P_{\rd}} (\bG , \bG_t) = \frac{\big|S(\bG,\kappa)\big|}{\E_{\rd}\big|S(\bG,\kappa)\big|} \, .
\end{equation}
Note that the above density does not depend on its second argument $\bG_t$. An equivalent way of describing the planted distribution is to first generate a \emph{planted configuration} $\bx$ uniformly from $\{-1,+1\}^n$, then generate $\bG$ from the Gaussian or Rademacher distribution conditionally on the event $\bx \in S(\bG,\kappa)$, and then generate $\bG_t$ according to either description \ref{it:g} or \ref{it:r}.  

The first result is about the magnitude of the likelihood ratio in Eq.~\eqref{eq:lr} which will allows to transfer statements established under the planted model to equivalent statements under the null model. 

\begin{lemma}[\cite{perkins2021frozen,abbe2022proof}]\label{lem:lr}
Let $L = \rmd \P_{\pl} / \rmd \P_{\rd}$. 
For all $\alpha < \alpha_{\sSAT}(\kappa)$ there exists a constant $C>0$ such that    
\[ \lim_{n \to \infty} \P_{\rd}\Big(|\log L| \le C\log n\Big) = 1\, ,\]
in both the Rademacher and the Gaussian cases. 
\end{lemma}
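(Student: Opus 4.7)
The plan is to split the bound $|\log L| \le C \log n$ into one-sided bounds $L \le n^C$ and $L \ge n^{-C}$ on the likelihood ratio $L = Z/\E_{\rd}[Z]$, where $Z := |S(\bG,\kappa)|$. Since $L$ depends only on $\bG$ (not on $\bG_t$), the claim reduces to an estimate on the partition function under the marginal law of $\bG$, which is the same under both $\P_{\rd}$ and $\P_{\pl}$ restricted to the first coordinate.

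The upper direction is immediate: $L$ is a Radon--Nikodym derivative, so $\E_{\rd}[L]=1$, and Markov's inequality gives $\P_{\rd}(L \ge n^C) \le n^{-C}$. Any constant $C \ge 1$ therefore suffices to ensure $\P_{\rd}(\log L > C\log n)\to 0$.

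The lower direction is the substantive one, and this is where I would invoke the cited works. For $\alpha < \alpha_{\sSAT}(\kappa)$, Abbe--Li--Sly~\cite{abbe2022proof} and Perkins--Xu~\cite{perkins2021frozen} establish that $Z/\E_{\rd}[Z]$ converges in distribution under $\P_{\rd}$ to a strictly positive lognormal random variable $W$. Since $\P(W \le \epsilon)\to 0$ as $\epsilon \to 0$, distributional convergence yields $\P_{\rd}(L \le \epsilon_n)\to 0$ for any deterministic sequence $\epsilon_n\to 0$, and in particular $\P_{\rd}(L \le n^{-C})\to 0$. Equivalently, the same conclusion may be read off from the mutual contiguity of $\P_{\pl}$ and $\P_{\rd}$ proved in those references, which is precisely the tightness of both $L$ and $1/L$ under $\P_{\rd}$; in fact, contiguity already gives the strictly stronger bound $|\log L| = O_{\P_{\rd}}(1)$, well within the polylogarithmic slack requested here.

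The main obstacle lies inside the cited results and is hence delegated rather than faced head-on: the lower bound on $Z$ is obtained via a \emph{conditional} second-moment method on a carefully chosen typical set of planted solutions (the vanilla second moment for the symmetric binary perceptron being known to fail at positive density), together with a fluctuation analysis for $\log Z$ that upgrades the Paley--Zygmund constant-probability lower bound to the full lognormal limit. Re-deriving these is well beyond what the lemma calls for; the statement is best viewed as a quantitative repackaging of those results, tailored to transferring high-probability statements between the planted and null models later in the paper at a multiplicative cost of only $n^{O(1)}$.
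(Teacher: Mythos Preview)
The paper does not prove this lemma; it cites the references and notes, in the paragraph immediately after the statement, that the stronger log-normal limit for $L$ yields contiguity and hence $|\log L|=O_{\P_{\rd}}(1)$, comfortably inside the claimed $C\log n$ window---exactly your reduction (Markov for the upper tail, distributional limit from the cited works for the lower tail). Two small corrections: the marginal law of $\bG$ is \emph{not} the same under $\P_{\rd}$ and $\P_{\pl}$ (the latter is the former tilted by $|S(\bG,\kappa)|/\E_{\rd}|S(\bG,\kappa)|$), though this slip is harmless since the statement is under $\P_{\rd}$ and $L$ is $\bG$-measurable; and the log-normal limit in the \emph{Gaussian} case is attributed by the paper to Sah--Sawhney~\cite{sah2023distribution} rather than to the two references in the lemma header, though those references, via the conditional second-moment and fluctuation analysis you describe, still deliver the weaker polylogarithmic bound actually used.
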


The above result was proved by Perkins and Xu~\cite{perkins2021frozen} under a numerical assumption, first identified in~\cite{aubin2019storage}, about the possible maxima of a certain univariate function. Subsequent work of Abb\'e, Li and Sly~\cite{abbe2022proof} verified this condition and proved the stronger result that $L$ has a log-normal limit (of constant order) when the disorder is Rademacher. Subsequent work of Sah and Sawhney~\cite{sah2023distribution} extended the latter result to the Gaussian case very close to the satisfiability threshold, i.e., when $m = \alpha_{\sSAT}(\kappa)n - \eta \log n$, $\eta>0$. Log-normality of $L$ implies that the planted and null distributions are mutually \emph{contiguous}, meaning that any (sequence of) rare event(s) under one distribution is also rare under the other~\cite[Lemma 6.4]{van1998asymptotic}.      
For our purposes the weak statement in the above lemma is enough, as we will only need to transfer statements about events which are super-polynomially rare as we clarify in the next lemma.         
 
\begin{lemma}\label{lem:contig}
Suppose $(E_n)$ a sequence of events in $\mathcal{F}$ such that $\P_{\pl}(E_n) = o(n^{-C})$ where $C$ is the constant appearing in Lemma~\ref{lem:lr}. Then  $\P_{\rd}(E_n) =o_n(1)$.  
\end{lemma}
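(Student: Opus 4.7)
The plan is a change-of-measure argument calibrated precisely to the quantitative control of the likelihood ratio in Lemma~\ref{lem:lr}. Let $L = \rmd \P_{\pl}/\rmd \P_{\rd}$ and define the good event $A_n = \{|\log L| \le C \log n\}$, on which $n^{-C} \le L \le n^C$. By Lemma~\ref{lem:lr}, $\P_{\rd}(A_n^c) = o(1)$, and by absolute continuity of $\P_{\pl}$ with respect to $\P_{\rd}$ (which is built into the definition of $L$), the same holds under $\P_{\pl}$. The key identity is then the standard change-of-measure relation $\P_{\pl}(B) = \E_{\rd}[L \indi_B]$ for any event $B \in \mathcal{F}$.

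The execution is a two-line split. First,
\[
\P_{\rd}(E_n) \le \P_{\rd}(E_n \cap A_n) + \P_{\rd}(A_n^c),
\]
with the second term already $o(1)$ by Lemma~\ref{lem:lr}. For the first term, since $L \ge n^{-C}$ on $A_n$,
\[
\P_{\pl}(E_n \cap A_n) = \E_{\rd}\big[L \indi_{E_n \cap A_n}\big] \ge n^{-C} \, \P_{\rd}(E_n \cap A_n),
\]
which rearranges, together with the hypothesis $\P_{\pl}(E_n) = o(n^{-C})$, to
\[
\P_{\rd}(E_n \cap A_n) \le n^C \, \P_{\pl}(E_n \cap A_n) \le n^C \, \P_{\pl}(E_n) = o(1).
\]
Summing the two bounds yields $\P_{\rd}(E_n) = o(1)$.

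I do not anticipate any real obstacle: the step sizes are matched so that the super-polynomial smallness $o(n^{-C})$ assumed under the planted law exactly absorbs the worst-case polynomial blow-up $n^{C}$ of $L^{-1}$ on the good event. The only conceptual point worth emphasizing is that we do not have constant-order control on $L$ (which would yield full mutual contiguity in the sense of Le Cam, transferring \emph{all} $o(1)$ events across the two measures). Because the control is only polynomial, one can only transfer events whose planted probability decays faster than any polynomial beats $n^{C}$, and this is precisely the calibration built into the hypothesis of the lemma.
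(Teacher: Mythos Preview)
Your proof is correct and follows essentially the same change-of-measure argument as the paper: split according to whether $L$ is polynomially bounded below, use Lemma~\ref{lem:lr} on the bad part, and absorb the $n^C$ blow-up with the hypothesis $\P_{\pl}(E_n)=o(n^{-C})$ on the good part. The only cosmetic difference is that the paper splits on the one-sided event $\{L>n^{-C}\}$ and writes the good part directly as $\E_{\pl}[L^{-1}\indi]$, whereas you use the two-sided event $A_n$ and the equivalent inequality $\P_{\pl}(E_n\cap A_n)\ge n^{-C}\P_{\rd}(E_n\cap A_n)$. One small aside: your remark that $\P_{\pl}(A_n^c)=o(1)$ ``by absolute continuity'' is neither correctly justified (absolute continuity transfers null sets, not $o(1)$ events) nor used anywhere in your argument, so you can simply delete it.
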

\begin{proof}
We have
\begin{align*}
\P_{\rd}(E_n) &= \E_{\pl}\big[\indi\{E_n, L>n^{-C}\} L^{-1}\big] 
+ \E_{\rd}\big[\indi\{E_n, L<n^{-C}\}\big]\\
&\le n^C \P_{\pl}(E_n) + \P_{\rd}(L<n^{-C})\, . 
\end{align*}
The above tends to zero since $n^{C}\P_{\pl}(E_n) \to 0$ and as a consequence of Lemma~\ref{lem:lr} for the second term. 
\end{proof}
 
Now we are ready to prove the main theorem. 
 \subsection{Proof of Theorem~\ref{thm:w_2-chaos}}
The main idea is that conditional on $S(\bG,\kappa)$ not being empty and for $t$ not too close to 1, with high probability there is no solution $\by \in S(\bG_t,\kappa)$ to the interpolated problem within a Hamming distance $n \delta$ from a \emph{randomly chosen} solution $\bx \in S(\bG,\kappa)$ to the original problem, for some $\delta = \delta(\alpha,\kappa)>0$ not depending on $t$. 
For $t<1$ and $\delta>0$ and two arrays $\bG, \bG_t \in \R^{m \times n}$, consider the set 
\begin{equation}
E_{\delta,t} = \Big\{ \bx \in S(\bG, \kappa) \,:\, \forall \, \by \in S(\bG_t,\kappa)\, , ~d_{H}(\bx , \by) > \delta n  \Big\} \, ,
\end{equation}
where $d_{H}$ again denotes the Hamming distance between binary vectors.
We observe that under any coupling $\pi$ of $\mu_{\bG}$ and $\mu_{\bG_t}$, we have
\begin{equation}
\E_{(X,Y)\sim \pi} \Big[\big\|X - Y\big\|_2^2\Big] = 4\E_{X \sim \mu_{\bG}}\Big[\E\big[d_{H}(X,Y) \, | \, X \big]\Big] \ge 4\delta n\, \mu_{\bG}(E_{\delta,t})\, .
\end{equation}
Therefore it suffices to lower-bound $\mu_{\bG}(E_{\delta,t})$. We will in fact show that $\mu_{\bG}(E_{\delta,t}) \to 1$ in probability for some $\delta>0$ not depending on $t$ as long as $n \sqrt{1-t} \ge C \log n$ in the Gaussian case and $\sqrt{n} \sqrt{1-t} \ge C \log n$ in the Rademacher case.
We achieve this via a first moment method in the planted distribution. 
Our goal will be to show that $\E_{\pl}\big[\mu_{\bG}(E_{\delta,t}^c)\big] \to 0$ as $n \to \infty$. 
By using Eq.~\eqref{eq:lr} we have
 \begin{align}\label{eq:mu^c}
 \E_{\pl}\big[\mu_{\bG}(E_{\delta,t}^c)\big]
 &=   \E_{\pl} \Big[ \frac{1}{|S(\bG, \kappa)|} \sum_{\bx \in \{-1,+1\}^n} \indi\Big\{\bx \in S(\bG, \kappa) \,, \exists \, \by \in S(\bG_t,\kappa)\,\, \mbox{s.t.} \,\, d_{H}(\bx , \by) \le \delta n\Big\} \Big]\nonumber\\
&= \frac{1}{\E|S(\bG,\kappa)|} \sum_{\bx \in \{-1,+1\}^n}  \P_{\rd}\Big(\bx \in S(\bG, \kappa) \,,~ \exists \, \by \in S(\bG_t,\kappa)\,\, \mbox{s.t.} \,\,d_{H}(\bx , \by) \le \delta n\Big) \nonumber\\
&= \frac{1}{2^n} \sum_{\bx \in \{-1,+1\}^n}  \P_{\rd}\Big(\exists \, \by \in S(\bG_t,\kappa)\,\, \mbox{s.t.} \,\, d_{H}(\bx , \by) \le \delta n \, \Big|\, \bx \in S(\bG, \kappa)\Big)\, .
 \end{align}    
   
By sign symmetry of the distribution of $(\bG,\bG_t)$, the probability in the above display does not depend on $\bx$, which we may assume fixed in all subsequent calculations. We also omit the subscript `$\mathrm{rd}$'. Let $\by \in \{-1,1\}^n$, $\bg_t$ be distributed as any row of $\bG_t$ and $\bg$ be distributed as a row of $\bG$ (under $\P_{\rd}$). By Markov's inequality and independence of the rows of $\bG$ and $\bG_t$, we obtain from Eq.~\eqref{eq:mu^c}
\begin{align}
\E_{\pl}\big[\mu_{\bG}(E_{\delta,t}^c)\big] &\le \E\Big[\Big|\Big\{\by \in S(\bG_t,\kappa)   \,:\,  d_{H}(\bx,\by) \le  n \delta\Big\}\Big| \, \big|\, \bx \in S(\bG, \kappa) \Big]\nonumber\\ 
&= \sum_{\by:d_{H}(\bx,\by) \le  n \delta} \P\Big(\big| \langle \bg_t , \by \rangle \big| \le \kappa \sqrt{n}  \,\Big|\, \big|\langle \bg , \bx \rangle \big| \le \kappa \sqrt{n} \Big)^m \, .\label{eq:second}
\end{align}
\paragraph{The Gaussian case:} When the disorder is Gaussian  the conditional probability in the above reads
\begin{align}\label{eq:prob}
\P\Big(\big| \langle \bg_t , \by \rangle \big| \le \kappa \sqrt{n} \,   \Big| \,  \big| \langle \bg , \bx \rangle \big| \le \kappa \sqrt{n} \Big) 
&= \P\Big(\big| \langle \bg_{t} , \by \rangle \big| \le \kappa \sqrt{n}  \, , \big| \langle \bg , \bx \rangle \big|  \le \kappa \sqrt{n}  \Big)/ \P\Big(\big| \langle \bg , \bx \rangle \big| \le \kappa \sqrt{n} \Big) \nonumber \\
&= \P\big(| Z_0 |  \le \kappa \, , | Z_r | \le \kappa \big)/ \P\big(| Z_0 | \le \kappa \big) \, ,
\end{align}
where $r = t \, \langle \bx , \by \rangle / n$, and $(Z_0,Z_r)$ is a pair of centered Gaussian random variables with unit variances and correlation $\E[Z_0Z_r]=r$. 
Let us use the notation 
\begin{equation}
p(\kappa) =  \P\big( | Z_0 | \le \kappa\big) \, , ~~~~ \mbox{and}~~~~ q_{\kappa}(r) = \P\big( | Z_0 | \le \kappa \, ,\, |Z_r | \le \kappa \big) \, .\nonumber
\end{equation}
We will need facts about the monotonicity of $q_{\kappa}$ and estimates on its behavior near $r=1$:
\begin{lemma}\label{lem:estimates_q}
The map $r \mapsto q_{\kappa}(r)$ is even and non-decreasing  on $[0,1]$. Furthermore, there exists $r_0 \in (0,1)$ and $c>0$ both depending on $\kappa>0$ such that for all $r \in [r_0,1)$,
\begin{equation} \label{eq:ubq}
q_{k}(r) - p(\kappa) \le -c\sqrt{1-r}\, .
\end{equation}
\end{lemma}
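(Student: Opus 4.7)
The plan is to treat the two claims separately, relying on a Plackett-type identity for the bivariate normal density to handle both.

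For evenness, I would observe that if $(Z_0, Z_r)$ is a centered Gaussian pair with unit variances and correlation $r$, then $(Z_0, -Z_r)$ is centered Gaussian with unit variances and correlation $-r$, hence has the same law as $(Z_0, Z_{-r})$. Since the event $\{|Z_r| \le \kappa\}$ is invariant under $Z_r \mapsto -Z_r$, this yields $q_\kappa(-r) = q_\kappa(r)$.

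For monotonicity on $[0,1]$, the main tool is the identity $\partial_r \phi_r(x,y) = \partial_x \partial_y \phi_r(x,y)$, where $\phi_r$ denotes the density of $(Z_0,Z_r)$. Differentiating under the integral sign and applying the fundamental theorem of calculus in $x$ and in $y$ gives
\begin{equation*}
q_\kappa'(r) = \phi_r(\kappa,\kappa) - \phi_r(\kappa,-\kappa) - \phi_r(-\kappa,\kappa) + \phi_r(-\kappa,-\kappa) = 2\bigl(\phi_r(\kappa,\kappa) - \phi_r(\kappa,-\kappa)\bigr),
\end{equation*}
using the symmetry $\phi_r(x,y) = \phi_r(-x,-y)$. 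A direct computation yields
\begin{equation*}
\phi_r(\kappa,\kappa) = \frac{1}{2\pi\sqrt{1-r^2}}\,\exp\!\Bigl(-\tfrac{\kappa^2}{1+r}\Bigr), \qquad \phi_r(\kappa,-\kappa) = \frac{1}{2\pi\sqrt{1-r^2}}\,\exp\!\Bigl(-\tfrac{\kappa^2}{1-r}\Bigr),
\end{equation*}
and since $\kappa^2/(1+r) \le \kappa^2/(1-r)$ for $r \in [0,1)$, the first term dominates, giving $q_\kappa'(r) \ge 0$.

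For the quantitative estimate near $r = 1$, I would start from the explicit formula above: as $r \to 1$, the term $\phi_r(\kappa,-\kappa)$ decays like $\exp(-\kappa^2/(1-r))$ and is negligible, while $\phi_r(\kappa,\kappa)$ blows up as $(1-r)^{-1/2}$ with the prefactor approaching $e^{-\kappa^2/2}/(2\pi\sqrt{2})$. Concretely, choose $r_0 \in (0,1)$ large enough that $\exp(-\kappa^2/(1-r)) \le \tfrac{1}{2}\exp(-\kappa^2/(1+r))$ for all $r \in [r_0,1)$; then
\begin{equation*}
q_\kappa'(r) \ge \frac{e^{-\kappa^2/2}}{2\pi\sqrt{1-r^2}} \ge \frac{e^{-\kappa^2/2}}{2\pi\sqrt{2}\,\sqrt{1-r}}.
\end{equation*}
Since $q_\kappa(1)=p(\kappa)$, integrating from $r$ to $1$ gives
\begin{equation*}
p(\kappa) - q_\kappa(r) = \int_r^1 q_\kappa'(s)\,\mathrm{d}s \ge \frac{e^{-\kappa^2/2}}{2\pi\sqrt{2}}\int_r^1 \frac{\mathrm{d}s}{\sqrt{1-s}} = \frac{e^{-\kappa^2/2}}{\pi\sqrt{2}}\,\sqrt{1-r},
\end{equation*}
yielding the claim with $c = e^{-\kappa^2/2}/(\pi\sqrt{2})$.

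No step here looks subtle: Plackett's identity makes the derivative computation immediate, and the quantitative part is just an elementary asymptotic of a closed-form expression. The only thing to be slightly careful about is justifying the derivative exchange (standard dominated convergence since $\phi_r$ and its mixed partial are smooth and bounded uniformly on any compact $r$-interval bounded away from $\pm 1$).
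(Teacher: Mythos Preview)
Your argument is correct in spirit and takes a different route from the paper, with one small numerical slip to fix.

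\textbf{The slip.} In your lower bound
\[
q_\kappa'(r) \ge \frac{e^{-\kappa^2/2}}{2\pi\sqrt{1-r^2}},
\]
you have implicitly used $e^{-\kappa^2/(1+r)} \ge e^{-\kappa^2/2}$, but for $r<1$ the inequality goes the other way since $\kappa^2/(1+r) > \kappa^2/2$. The fix is immediate: replace $e^{-\kappa^2/2}$ by $e^{-\kappa^2/(1+r_0)}$ (or simply $e^{-\kappa^2}$), which is a valid lower bound for $e^{-\kappa^2/(1+r)}$ on $[r_0,1)$. The rest of the integration goes through unchanged, only with a slightly smaller constant $c$.

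\textbf{Comparison with the paper.} For monotonicity, the paper derives the same closed form for $q_\kappa'(r)$ but via a direct computation (differentiating the conditional-probability representation and doing a change of variables, relegated to an appendix); your Plackett identity $\partial_r\phi_r=\partial_x\partial_y\phi_r$ reaches the formula in one line and is cleaner. For the quantitative bound~\eqref{eq:ubq}, the paper does \emph{not} integrate the derivative: it bounds $p(\kappa)-q_\kappa(r)=\P(|Z_r|>\kappa,\,|Z_0|\le\kappa)$ directly, restricting to $Z_0\in(\kappa-\varepsilon,\kappa)$, using the conditional law of $Z_r$ given $Z_0$, and choosing $\varepsilon=\kappa\sqrt{1-r^2}$. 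Your approach is arguably more natural given that you already have a sharp lower bound on $q_\kappa'$; the paper's route avoids needing the exact derivative formula but is a bit more ad hoc. Both give the same $\sqrt{1-r}$ order with $\kappa$-dependent constants.
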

We delay the proof of the above lemma to the end of this section.   
Returning to our main argument, for the case $\by = \bx$ in the sum~\eqref{eq:second}, we have for all $t>0$,
\begin{align}\label{eq:1tolast_g}
\P\Big(\big| \langle \bg_t , \bx \rangle \big| \le \kappa \sqrt{n} \,   \Big| \,  \big| \langle \bg , \bx \rangle \big| \le \kappa \sqrt{n} \Big)^m &=  \Big(q_{\kappa}(t) \big/  p(\kappa)\Big)^{m} \, .\nonumber
\end{align}
Next, to treat  the remaining terms, since $q_{\kappa}$ is even and non-decreasing in $t \in [0,1]$, the probabilities in~\eqref{eq:second} are largest for $t=1$. 
Therefore,
\begin{equation}
\E_{\pl}\big[\mu_{\bG}(E_{\delta,t}^c)\big] \le \Big(q_{\kappa}(t) \big/  p(\kappa)\Big)^{m} + \E\Big[\Big|\Big\{\by \in S(\bG,\kappa) \setminus\{\bx\}  \,:\,  d_{H}(\bx,\by) \le  n \delta\Big\}\Big| \, \big|\, \bx \in S(\bG, \kappa) \Big] \, .\nonumber
\end{equation}

The second term counts the number of solutions near a typical one in the planted model. Perkins and Xu~\cite{perkins2021frozen} showed the following bound in the Gaussian disorder case:     
\begin{lemma}\label{lem:isolatedG}
For all $\alpha< \alpha_{\sSAT}(\kappa)$, there exists $\delta =  \delta(\alpha,\kappa)>0$ and $c = c(\delta)>0$ such that for any fixed $\bx \in \{-1,+1\}^n$,
 \begin{equation}
 \E\Big[\Big|\Big\{\by \in S(\bG,\kappa) \setminus\{\bx\}  \,:\,  d_{H}(\bx,\by) \le  n \delta\Big\}\Big| \, \big|\, \bx \in S(\bG, \kappa) \Big] \le e^{-c \sqrt{n}} \, .\nonumber
 \end{equation}
\end{lemma}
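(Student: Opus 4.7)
My plan is to apply the first moment method under the planted measure and exploit two ingredients: the exact bivariate Gaussian joint law of $(\langle\bg_a,\bx\rangle,\langle\bg_a,\by\rangle)/\sqrt{n}$, which gives a clean closed form for the conditional probability; and the square-root singularity of $q_\kappa$ at $r=1$ from Lemma~\ref{lem:estimates_q}, which delivers the $\sqrt{n}$ rate. By the sign-flip symmetry of the Gaussian $\bG$, I may fix $\bx = \mathbf{1}$ without loss of generality. For any $\by$ at Hamming distance $k = d_H(\bx,\by)$, independence of rows yields $\P(\by\in S(\bG,\kappa) \mid \bx\in S(\bG,\kappa)) = \bigl(q_\kappa(r)/p(\kappa)\bigr)^m$ with $r = 1 - 2k/n$, and enumerating by $k$ gives
\begin{equation*}
\E\big[\big|\{\by \in S(\bG,\kappa)\setminus\{\bx\}: d_H(\bx,\by) \le n\delta\}\big| \,\big|\, \bx\in S(\bG,\kappa)\big] \;=\; \sum_{k=1}^{\lfloor\delta n\rfloor} \binom{n}{k}\left(\frac{q_\kappa(1-2k/n)}{p(\kappa)}\right)^{\!m}.
\end{equation*}

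Next I would fix $\delta \le (1-r_0)/2$ so that Lemma~\ref{lem:estimates_q} applies on the full range of overlaps, giving $q_\kappa(1-2k/n)/p(\kappa) \le 1 - c_1\sqrt{k/n}$ for some $c_1 = c_1(\kappa)>0$ and all $k\le\delta n$. Using $\binom{n}{k} \le (en/k)^k$, the bound $\log(1-x) \le -x$, and $m \ge \alpha n/2$, the $k$th summand is at most $\exp\bigl(k\log(en/k) - c_2\sqrt{nk}\bigr)$ with $c_2 = c_1\alpha/2$. Writing $u = k/n$, the exponent equals $-n\sqrt{u}\,\bigl(c_2 - \sqrt{u}\log(e/u)\bigr)$; since $\sqrt{u}\log(e/u) \to 0$ as $u\to 0$, a further shrinking of $\delta$ ensures $\sqrt{u}\log(e/u) \le c_2/2$ on $(0,\delta]$, so each summand is at most $\exp\bigl(-(c_2/2)\sqrt{nk}\bigr) \le \exp\bigl(-(c_2/2)\sqrt{n}\bigr)$ for $k\ge 1$. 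Summing the at most $\delta n$ terms and using $\log n - (c_2/2)\sqrt{n} \le -c\sqrt{n}$ for any $c < c_2/2$ and $n$ large yields the claimed $\exp(-c\sqrt{n})$ bound.

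The only nontrivial ingredient---and what distinguishes the $\sqrt{n}$ rate from a linear one---is the square-root singularity in Lemma~\ref{lem:estimates_q}. At Hamming distance $k=1$ the overlap is $r = 1 - 2/n$, so $\sqrt{1-r} = \Theta(1/\sqrt{n})$ and the per-constraint factor $q_\kappa(r)/p(\kappa)$ differs from $1$ only by $\Theta(1/\sqrt{n})$; raised to the $m = \Theta(n)$ power this becomes $\exp(-\Theta(\sqrt{n}))$, easily dominating the combinatorial factor $\binom{n}{1}=n$. Beyond this the calculation is bookkeeping; the only potential pitfall is verifying that the exponential rate stays negative across the full range $k \in [1, \delta n]$, which reduces to the one-line fact that $\sqrt{u}$ dominates $u\log(1/u)$ near zero. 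Note that the hypothesis $\alpha < \alpha_{\sSAT}(\kappa)$ is not used in a delicate way in this sketch beyond keeping the density fixed as $n\to\infty$, so the same bound holds for any $\alpha>0$.
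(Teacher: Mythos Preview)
Your argument is correct. The paper does not actually prove this lemma; it attributes the bound to Perkins and Xu~\cite{perkins2021frozen} and uses it as a black box. What you have written is precisely the first-moment computation underlying their result: express the conditional expectation as $\sum_{k=1}^{\lfloor\delta n\rfloor}\binom{n}{k}(q_\kappa(1-2k/n)/p(\kappa))^m$, invoke the square-root singularity of $q_\kappa$ near $r=1$ (Lemma~\ref{lem:estimates_q}) to get a per-row deficit of order $\sqrt{k/n}$, and then check that the resulting exponential decay $\exp(-c_2\sqrt{nk})$ beats the entropy $\binom{n}{k}$ uniformly once $\delta$ is taken small enough. Your observation that the hypothesis $\alpha<\alpha_{\sSAT}(\kappa)$ plays no role in the bound itself is also correct; it is stated in the lemma only because that is the regime in which the surrounding argument operates.
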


Using Lemma~\ref{lem:isolatedG}, there exists $\delta_0 = \delta_0(\alpha,\kappa)>0$ (not depending on $t$) and $c = c(\delta)>0$ such that
\begin{equation}
\E_{\pl}\big[\mu_{\bG}(E_{\delta_0,t}^c)\big] \le p(t)^{\alpha n} + e^{- c\sqrt{n}} \, ,\nonumber
\end{equation}
 where $p(t) = q_{\kappa}(t) / p(\kappa)$. 
By Lemma~\ref{lem:estimates_q}, $p(t) \le 1- c \sqrt{1-t}$ for $t$ close enough to 1, for some constant $c = c(\kappa)>0$. Therefore, as long as 
\[c\alpha  n \sqrt{1-t} -  C\log n \to +\infty\] 
where $C$ is the constant appearing in Lemma~\ref{lem:contig} we have $n^C p(t)^{\alpha n} \to 0$ and we can apply Lemma~\ref{lem:contig} to deduce that $\E_{\rd} \big[\mu_{\bG}(E_{\delta_0,t}^c)\big] \to 0$, and therefore
\begin{equation}
\lim_{n\to \infty} \,\,\mu_{\bG}(E_{\delta_0,t}) = 1~~~~~~\mbox{in prob.\ under} ~\P_{\rd}\,.
\end{equation} 
This yields desired result. It remains to  prove Lemma~\ref{lem:estimates_q} to conclude the argument in the Gaussian case.

\begin{proof}[Proof of Lemma~\ref{lem:estimates_q}]
Monotonicity follows from the formula    
\[q_{\kappa}'(t) = \frac{1}{\pi} \frac{e^{-\kappa^2/2}}{\sqrt{1-t^2}} \left(e^{-\kappa^2(1-t)/2(1+t)} -e^{-\kappa^2(1+t)/2(1-t)}\right) \, ,\]
which is positive for $t \ge 0$. This is obtained by differentiating $q_{\kappa}(t)$ and using the properties of the Gaussian distribution. The computation is relegated to Appendix~\ref{app:blah}. 
%
Now we show the bound~\eqref{eq:ubq}. Letting $\phi$ denote standard normal density and $\varepsilon>0$ we have
\begin{align}
p(\kappa) - q_{\kappa}(t) = \pr(|Z_t|>\kappa, |Z_0|\le \kappa)
&\ge \pr\left(Z_t>\kappa, Z_0\in \left(\kappa-\varepsilon,\kappa\right)\right) \notag\\
&=\int_{\kappa-\varepsilon}^\kappa
\pr(Z_r\ge \kappa| Z=z)\phi(z)dz. \label{eq:Zt-integrated}
\end{align}
We use the fact that
condition on $Z_0=z$, $Z_t$ is distributed
as normal random variable with mean $tz$ and variance
$1-t^2$. Therefore, letting $X$ denote a standard normal
\begin{align*}
\pr(Z_t\ge \kappa| Z=z)=\pr\left(X\ge \frac{\kappa-tz}{\sqrt{1-t^2}}\right).
\end{align*}
When $z\in (\kappa-\varepsilon,\kappa)$,
\begin{align*}
{\kappa-tz\over \sqrt{1-t^2}}
&\le
\kappa {1-t \over \sqrt{1-t^2}}+{\varepsilon t\over \sqrt{1-t^2}} \\
&=\kappa {\sqrt{1-t} \over \sqrt{1+t}}+{\varepsilon t\over\sqrt{1-t^2}} \\
&\le \kappa+{\varepsilon \over \sqrt{1-t^2}}.
\end{align*}
Now taking $\varepsilon = \kappa \sqrt{1-t^2}$,
the integral in (\ref{eq:Zt-integrated}) is at least
$\pr(X \ge 2\kappa) \pr(\kappa - \varepsilon
<Z<\kappa)\ge C(\kappa) \varepsilon$ if $t$ is close enough to $1$. The proof of Eq.~\eqref{eq:ubq} is complete.
\end{proof}

\paragraph{The Rademacher case:} When the disorder is Rademacher we restart the argument from Eq.~\eqref{eq:second} and follow the same outline by invoking the following two Berry-Esseen bounds: 
\begin{lemma}\label{lem:clt}
Let $X_1,\cdots,X_n$ be i.i.d.\ vectors in $\R^2$ where $X_i = (X_{i,1},X_{i,2})$ is a bivariate vector of two unbiased Rademacher r.v.'s having correlation $t$. Let $\xi \in \{-1,+1\}^n$ be fixed and 
let $S_{n,1} = (1/\sqrt{n}) \sum_{i=1}^n X_{i,1}$, $S_{n,2} = (1/\sqrt{n}) \sum_{i=1}^n \xi_i X_{i,2}$. 
There exists a universal constant $C>0$ such that for all $\kappa>0$ and $t \in [0,1]$ we have 
\begin{align}\label{eq:clt}
|\P\big(S_{n,1} \in [-\kappa,\kappa]\big) - p(\kappa)| \le \frac{C}{\sqrt{n}} \,,~~~\mbox{and}~~~
\P\big(S_n \in [-\kappa,\kappa]^2\big) - q_{\kappa}\Big(\frac{t}{n}\sum_{i=1}^n\xi_i\Big) \le \frac{C\log n}{\sqrt{n}} \, ,
\end{align}
where $S_n = (S_{n,1},S_{n,2})$.
\end{lemma}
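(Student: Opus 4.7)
The first bound is a direct application of the one-dimensional Berry--Esseen theorem: $S_{n,1}$ is the normalized sum of $n$ i.i.d.\ unbiased Rademacher variables, each with unit variance and third absolute moment equal to $1$, so $\sup_u |\P(S_{n,1} \le u) - \Phi(u)| \le C/\sqrt n$ for a universal constant, and evaluating at $u = \pm\kappa$ yields the stated estimate on $\P(S_{n,1} \in [-\kappa,\kappa])$.

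For the second bound, the key step is to recognize $S_n$ as a centered bivariate sum of independent bounded vectors whose covariance matches that of $(Z_0, Z_{r_n})$. Since $\xi_i \in \{\pm 1\}$ and $X_{i,2}$ is symmetric, $\xi_i X_{i,2}$ is itself an unbiased Rademacher, so the vectors $Y_i := (X_{i,1}, \xi_i X_{i,2})$ are independent, centered, bounded, with unit-variance marginals and within-summand covariance $t\xi_i$. Consequently $S_n = n^{-1/2}\sum_i Y_i$ has covariance matrix
\[
\Sigma_n = \begin{pmatrix} 1 & r_n \\ r_n & 1 \end{pmatrix}, \qquad r_n = \frac{t}{n}\sum_{i=1}^n \xi_i,
\]
which is exactly that of $(Z_0, Z_{r_n})$; hence $\P(\tilde Z \in [-\kappa,\kappa]^2) = q_\kappa(r_n)$ for $\tilde Z \sim N(0, \Sigma_n)$. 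The desired estimate then reduces to a bivariate Berry--Esseen bound on the axis-aligned rectangle $[-\kappa,\kappa]^2$ for sums of independent (not necessarily identically distributed) bounded summands, with error $O(\log n/\sqrt n)$. This can be established by an Esseen-type smoothing argument: compare the characteristic functions $\phi_n$ of $S_n$ and $\psi_n$ of $\tilde Z$, which satisfy $|\phi_n(u) - \psi_n(u)| \lesssim |u|^3 \psi_n(u)/\sqrt n$ on balls of radius $O(\sqrt n)$, integrate the difference against the Fourier-inverted smoothed indicator of the rectangle up to frequency $T = \sqrt n/\log n$, and absorb the tail via the Gaussian decay of $\psi_n$.

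\textbf{Main obstacle.} The principal delicacy is that $\Sigma_n$ depends on $n$ through $r_n$ and becomes singular as $|r_n| \to 1$, so a naive standardization by $\Sigma_n^{-1/2}$ would cause the Berry--Esseen constant to diverge and preclude a universal constant $C$. This is circumvented by running the smoothing argument directly on the rectangle $[-\kappa,\kappa]^2$ rather than on standardized coordinates: the anti-concentration input needed in the smoothing step, namely $\P(\tilde Z \in \text{boundary strip of width } \varepsilon\text{ around }[-\kappa,\kappa]^2) = O(\varepsilon)$, holds uniformly in $r_n\in[-1,1]$ by a union bound over the four faces, since each coordinate of $\tilde Z$ is marginally standard normal with bounded density. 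All constants in the argument thus remain uniform in $r_n$, which yields the $(\log n)/\sqrt n$ rate regardless of whether $\Sigma_n$ is close to singular.
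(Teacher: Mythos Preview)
The first bound is handled identically by you and the paper (one-dimensional Berry--Esseen). For the second bound your route via Esseen smoothing and characteristic functions is genuinely different from the paper's, but there is a real gap precisely at the place you flag as the main obstacle.

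You are right that anti-concentration of $\tilde Z$ in a width-$\varepsilon$ strip around $\partial[-\kappa,\kappa]^2$ is $O(\varepsilon)$ uniformly in $r_n$, since only the standard-normal marginals enter. But uniformity in $r_n$ is also required in the Fourier integral, and there it fails. Grant the bound $|\phi_n(u)-\psi_n(u)|\lesssim |u|^3\psi_n(u)/\sqrt n$ on $\{|u|\le c\sqrt n\}$; the factor $\psi_n(u)=\exp(-\tfrac12 u^\top\Sigma_n u)$ has essentially no decay along the near-null eigendirection of $\Sigma_n$. When $r_n\uparrow 1$ that direction is $u_1=-u_2$, along which $|\widehat{\mathbf 1}_{[-\kappa,\kappa]^2}(u)|\asymp|u|^{-2}$ while $|u|^3\psi_n(u)\asymp|u|^3$, so the integrand behaves like $|u|/\sqrt n$. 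Integrating over a tube of bounded width around that line up to frequency $T$ already contributes a term of order $\min\bigl(T^2,(1-r_n)^{-1}\bigr)/\sqrt n$, which with your choice $T=\sqrt n/\log n$ is not $O((\log n)/\sqrt n)$ uniformly in $r_n$; optimizing over $T$ produces a rate that degenerates as $r_n\to 1$, so no universal constant results. In short, the degeneracy of $\Sigma_n$ affects two places, not one: the smoothing step (where your fix works) and the characteristic-function integral (where it does not).

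The paper sidesteps the joint covariance entirely. Under any coupling of $(S_n,\tilde Z)$ one has
\[
\P\bigl(S_n\in[-\kappa,\kappa]^2\bigr)\le \P\bigl(\|\tilde Z\|_\infty\le\kappa+\varepsilon\bigr)+\P\bigl(\|S_n-\tilde Z\|_\infty>\varepsilon\bigr);
\]
the first term is $q_\kappa(r_n)+O(\varepsilon)$ by the same marginal anti-concentration you use (Nazarov), and the second is bounded by a union of two one-dimensional tails. Coupling each coordinate by increasing rearrangement and invoking Rio's Wasserstein CLT in the sub-exponential Orlicz norm, $W_{\psi}(S_{n,1},Z_0)\le C/\sqrt n$, one obtains $\P(|S_{n,j}-\tilde Z_j|>\varepsilon)\le 2e^{-c\varepsilon\sqrt n}$, with constants independent of $r_n$ since only marginals are involved. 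Taking $\varepsilon=C(\log n)/\sqrt n$ then yields the stated rate.
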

The first statement is the classical Berry-Esseen theorem, see e.g.~\cite{dudley2018real}. As for the second statement, while there are many proofs of the multivariate Central Limit Theorem adapted to various situations, we provide here a short proof based on the arguments of~\cite{zhai2018multivariate,rio2009upper} at the end of this section.   

Now going back to the main argument, we start from Eq.~\eqref{eq:second} which we rewrite here: 
\begin{equation}
\E_{\pl}\big[\mu_{\bG}(E_{\delta,t}^c)\big] \le 
 \sum_{\by:d_{H}(\bx,\by) \le  n \delta} \P\Big(\big| \langle \bg_t , \by \rangle \big| \le \kappa \sqrt{n}  \,\Big|\, \big|\langle \bg , \bx \rangle \big| \le \kappa \sqrt{n} \Big)^m \, .\label{eq:second2}
\end{equation}
We will split the above sum based on the distance $d_{H}(\bx,\by)$. 
In the notation of Lemma~\ref{lem:clt} with $\xi = (x_iy_i)_{i=1}^n$, and letting $r= (t/n)\sum_{i=1}^n \xi_i$ we have
\begin{align}
\P\Big(\big| \langle \bg_t , \by \rangle \big| \le \kappa \sqrt{n} \,   \Big| \,  \big| \langle \bg , \bx \rangle \big| \le \kappa \sqrt{n} \Big) &=  \P\big(S_n \in [-\kappa,\kappa]^2\big) \big/  \P\big(S_{n,1} \in [-\kappa,\kappa]\big) \nonumber\\
&\le \frac{q_{\kappa}(r) + Cn^{-1/2}\log n}{p(\kappa) - Cn^{-1/2}} \, .\label{eq:estimateclt}
\end{align}
If $d_{H}(\bx,\by) \le K = n^{1/2-\epsilon_0}$, $\epsilon_0>0$ small enough, we use the bound $q_{\kappa}(r) \le q_{\kappa}(t) \le p(\kappa) - c\sqrt{1-t}$ as per Lemma~\ref{lem:estimates_q}. 
Restricting $t$ so that 
\[c\sqrt{1-t} \ge 2C \frac{\log n}{\sqrt{n}}\,,\] 
Eq.~\eqref{eq:estimateclt} raised to the power $m$ is bounded by 
\[\left(\frac{p(\kappa) - c\sqrt{1-t}/2}{p(\kappa) - Cn^{-1/2}}\right)^m \le e^{- c_0(\kappa) m \sqrt{1-t} + c_1(\kappa) m/\sqrt{n}} \le e^{- c' \sqrt{n} \log n}\,, \]
with $c' = c'(\alpha,\kappa)>0$. Therefore,
\begin{align}
\sum_{\by:d_{H}(\bx,\by) \le  K} \P\Big(\big| \langle \bg_t , \by \rangle \big| \le \kappa \sqrt{n}  \,\Big|\, \big|\langle \bg , \bx \rangle \big| \le \kappa \sqrt{n} \Big)^m 
&\le \sum_{k=0}^{\lfloor K \rfloor} \binom{n}{k} \cdot e^{-c' \sqrt{n} \log n} \nonumber\\
\le K \binom{n}{\lfloor K \rfloor} e^{-c' \sqrt{n} \log n}
&\le  e^{-c'\sqrt{n} (\log n)/2}\, . \label{eq:firstbd}
\end{align}  
Next for the terms where $ n^{1/2-\epsilon_0} < d_{H}(\bx,\by) \le K = \epsilon n$, we use the bound 
\[q_{\kappa}(r) \le q_{\kappa}(\langle \bx,\by\rangle/n) \le p(\kappa) - c\sqrt{d_{H}(\bx,\by)/n} \, .\]
Since $\sqrt{d_{H}(\bx,\by)/n} > 1/n^{1/4 + \epsilon_0} \gg (\log n)/\sqrt{n}$ for $\epsilon_0<1/4$, we similarly obtain 
\begin{align}
\sum_{\by:n^{1/2-\epsilon_0} < d_{H}(\bx,\by) \le \epsilon n} \P\Big(\big| \langle \bg_t , \by \rangle \big| \le \kappa \sqrt{n}  \,\Big|\, \big|\langle \bg , \bx \rangle \big| \le \kappa \sqrt{n} \Big)^m 
&\le n \binom{n}{\lfloor K \rfloor} \cdot e^{-c \sqrt{K n}} 
\le e^{-c' n}\, , \label{eq:secondbd}
\end{align}
for $\epsilon$ small enough and $c' = c'(\alpha,\kappa,\epsilon)>0$. 
Finally for $ \epsilon n < d_{H}(\bx,\by) \le \delta n$, we still use the bound $q_{\kappa}(r) \le q_{\kappa}(\langle \bx,\by\rangle/n)$ and compare the quantity in Eq.~\eqref{eq:estimateclt} to its Gaussian analogue $q_{\kappa}(\langle \bx,\by\rangle/n)/p(\kappa)$. By a Taylor expansion we have
\begin{align*}
  \frac{p(\kappa)}{q_{\kappa}(\langle \bx,\by\rangle/n)}  \P\Big(\big| \langle \bg_t , \by \rangle \big| \le \kappa \sqrt{n} \,   \Big| \,  \big| \langle \bg , \bx \rangle \big| \le \kappa \sqrt{n} \Big)
  &\le 1+ c (\log n)/\sqrt{n} \le e^{c n^{-1/2} \log n}\, ,
\end{align*}
where $c = c(\kappa)>0$. Therefore  
\begin{align}
\sum_{\by:\epsilon n < d_{H}(\bx,\by) \le \delta n} \P\Big(\big| \langle \bg_t , \by \rangle \big| \le \kappa \sqrt{n}  \,\Big|\, \big|\langle \bg , \bx \rangle \big| \le \kappa \sqrt{n} \Big)^m 
&\le e^{c m  n^{-1/2} \log n} \hspace{-0.5cm}\sum_{\by:\epsilon n < d_{H}(\bx,\by) \le \delta n}  \Big(\frac{q_{\kappa}(\langle \bx,\by\rangle/n)} {p(\kappa)}\Big)^m\,.\label{eq:thirdbd}
\end{align}
This last inequality puts as back in the setting of Gaussian disorder, at the price of the factor $e^{c m n^{-1/2} \log n}$ as above. The last term was shown to be exponentially small in~\cite{perkins2021frozen}:
\begin{lemma}[Consequence of Lemma 5 and Lemma 8 in \cite{perkins2021frozen}]\label{lem:bdexponential}
Under Gaussian disorder $\bG$, for all $\alpha< \alpha_{\sSAT}(\kappa)$, there exists $\epsilon< \delta $ and $c>0$ depending on $\alpha,\kappa$ such that 
\[ \E\Big[\Big|\Big\{\by \in S(\bG,\kappa)  \,:\, \epsilon n < d_{H}(\bx,\by) \le  n \delta\Big\}\Big| \, \big|\, \bx \in S(\bG, \kappa) \Big] = \hspace{-0.5cm}\sum_{\by:\epsilon n < d_{H}(\bx,\by) \le \delta n}  \Big(\frac{q_{\kappa}(\langle \bx,\by\rangle/n)} {p(\kappa)}\Big)^m \le e^{-c n}\, .
\]
\end{lemma}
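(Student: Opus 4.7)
The plan is to first make the conditional expectation combinatorially explicit, then apply Stirling's formula to extract a one-dimensional rate function, and finally reduce the negativity of that rate function to the analysis carried out in~\cite{perkins2021frozen}.

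For the stated identity: under $\P_{\rd}$ the rows of $\bG$ are i.i.d., and for any fixed $\by$ with $\langle \bx, \by\rangle = n - 2k$, Eq.~\eqref{eq:prob} specialized to $t=1$ (so that $\bG_t = \bG$ and $r = \langle\bx,\by\rangle/n = 1-2k/n$) gives
\[
\P\bigl(\by \in S(\bG,\kappa)\bigm|\bx \in S(\bG,\kappa)\bigr) = \left(\frac{q_\kappa(1-2k/n)}{p(\kappa)}\right)^m.
\]
Summing over $\by$ grouped by $k = d_H(\bx,\by)$ and using that the number of $\by$ at Hamming distance $k$ from $\bx$ equals $\binom{n}{k}$ yields the equality in the lemma.

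For the exponential bound, I would apply Stirling's formula in the form $\binom{n}{k} \le e^{n H(k/n)}$, where $H$ denotes the binary entropy. Writing $\eta = k/n$ and using $m \le \alpha n$, the sum is bounded by
\[
(n+1)\exp\!\Big(n \sup_{\eta \in [\epsilon,\delta]} F(\eta)\Big), \qquad F(\eta) := H(\eta) + \alpha \log\!\frac{q_\kappa(1-2\eta)}{p(\kappa)}.
\]
It therefore suffices to show $\sup_{\eta \in [\epsilon,\delta]} F(\eta) < 0$ for a suitable choice of $\epsilon < \delta$ depending only on $\alpha,\kappa$.

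For the analysis of $F$, note that near $\eta = 0$, Lemma~\ref{lem:estimates_q} gives $q_\kappa(1-2\eta) \le p(\kappa) - c\sqrt{2\eta}$, so that $F(\eta) \le -2\eta\log\eta - c'\sqrt{\eta}$, which is strictly negative for all small enough $\eta > 0$ since $\sqrt{\eta}$ dominates $\eta|\log\eta|$. At the opposite endpoint $\eta = 1/2$, the defining equation $\alpha_{\sSAT}(\kappa) = -\log 2/\log p(\kappa)$ gives $F(1/2) = \log 2 + \alpha\log p(\kappa) < 0$. The content of Lemma~5 and Lemma~8 in~\cite{perkins2021frozen} is that the two regimes above can be upgraded to a uniform negativity statement: when $\alpha < \alpha_{\sSAT}(\kappa)$, one can choose $0 < \epsilon < \delta$ so that $F(\eta) \le -c < 0$ throughout $[\epsilon, \delta]$, at which point the desired bound $e^{-cn}$ follows immediately from the display above.

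The main obstacle is exactly this uniform negativity of $F$ on the bulk interval $[\epsilon, 1/2]$: the function is not concave and its behavior is governed by the rather delicate function $q_\kappa$, so ruling out any positive local maximum of $F$ away from $\eta=1/2$ is a nontrivial one-dimensional analysis. This was identified as a numerical condition in~\cite{aubin2019storage} and rigorously verified in~\cite{abbe2022proof}, and we invoke the resulting Perkins--Xu lemmas as a black box.
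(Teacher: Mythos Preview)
The paper does not prove this lemma but records it as a direct consequence of Lemmas~5 and~8 in~\cite{perkins2021frozen}, and your sketch is precisely how one unpacks that citation: the identity via Eq.~\eqref{eq:prob} at $t=1$, then Stirling to reduce to the negativity on $[\epsilon,\delta]$ of the rate function $F(\eta) = H(\eta) + \alpha\log\bigl(q_\kappa(1-2\eta)/p(\kappa)\bigr)$, and finally the Perkins--Xu analysis of $F$ as a black box. One small slip: you write ``using $m \le \alpha n$'', but since $q_\kappa/p(\kappa) \le 1$ this inequality goes the wrong way; use $m \ge \alpha n - 1$ instead, which only introduces a bounded multiplicative constant absorbed into the $e^{-cn}$.
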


Using the result of Lemma~\ref{lem:bdexponential} in Eq.~\eqref{eq:thirdbd} we obtain 
\begin{equation}\label{eq:fourthbd}
    \sum_{\by:\eps n < d_{H}(\bx,\by) \le \delta n} \P\Big(\big| \langle \bg_t , \by \rangle \big| \le \kappa \sqrt{n}  \,\Big|\, \big|\langle \bg , \bx \rangle \big| \le \kappa \sqrt{n} \Big)^m 
\le e^{-c' n} \, .
\end{equation}

Combining the bounds~\eqref{eq:firstbd}, \eqref{eq:secondbd} and \eqref{eq:fourthbd} we obtain 
\[\E_{\pl}\big[\mu_{\bG}(E_{\delta,t}^c)\big] \le e^{-c \sqrt{n} \log n} \, ,~~~ c>0\, ,\]
and this allows us to conclude once more. 

Finally, it remains to prove Lemma~\ref{lem:clt} to conclude the argument in the Rademacher case.

\begin{proof}[Proof of Lemma~\ref{lem:clt}]
We concentrate on proving the second bound in Eq.~\eqref{eq:clt}. For pedagogical reasons we first present a version of our argument which obtains the slower rate $1/n^{1/4}$, and then use a variation of this argument which achieves the rate $(\log n) / \sqrt{n}$. 

Let $G = (G_1,G_2)$ be a bivariate Gaussian random vector distributed as $(Z_0,Z_r)$, $r = (t/n)\sum_{i=1}^n\xi_i$. (Note that $\P(G \in [-\kappa,\kappa]^2) = q_\kappa(r)$.) For $\varepsilon>0$ and under any coupling of $(S_n,G)$ we have
\begin{align*}
\P\big(S_n \in [-\kappa,\kappa]^2\big) &\le \P\big(\|S_n\|_{\infty} \le \kappa, \|S_n-G\|_{\infty} \le \varepsilon \big) 
+ \P\big(\|S_n-G\|_{\infty} > \varepsilon \big) \\
&\le\P\big( \|G\|_{\infty} \le \kappa + \varepsilon \big) 
+ \P\big(\|S_n-G\|_{\infty} > \varepsilon \big) \, .
\end{align*}
On the one hand by Nazarov's inequality~\cite{nazarov2003maximal,chernozhukov2017detailed} (or direct calculations) we have 
\begin{equation*}
\P( \|G\|_{\infty} \le \kappa + \varepsilon) \le  \P(\|G\|_{\infty} \le \kappa) + C \varepsilon
\end{equation*}
for an absolute constant $C>0$. 
On the other hand by a union bound and Markov's inequality,
\begin{equation}\label{eq:tailbd}
\P\big(\|S_n-G\|_{\infty} > \varepsilon \big) \le \varepsilon^{-1} \E\big[\|S_n-G\|_1\big]\,. 
\end{equation}
Now consider coupling $S_n$ and $G$ coordinate-wise via increasing rearrangements: letting $F_n$ and $\Phi$ the cumulative distribution functions of the (real-valued) r.v.'s $S_{n,1}$ and $G_1$ respectively, we let $S_{n,j} = F_n(\Phi^{-1}(G_j))$, $j=1,2$. Then since the coordinates are identically distributed, 
\begin{equation*}
\E\big[\|S_n-G\|_1\big] =  W_1(S_{n,1},G_1) + W_1(S_{n,2},G_2) = 2  W_1(S_{n,1},G_1)\, ,
\end{equation*}
where $W_1(X,Y)$ is the Wasserstein-1 distance between $X$ and $Y$, i.e., the infimum of $\E[|X-Y|]$ over all couplings with the correct marginals. This infimum is achieved by the increasing rearrangement mentioned above.   
A CLT in the $W_1$ distance, equivalently a Berry-Esseen bound for the $L_1$ distance (rather than $L_{\infty}$) between the distribution functions is classical: 
\begin{equation*}
W_1(S_{n,1},G_1) \le \frac{C}{\sqrt{n}} \, ,
\end{equation*}
for some absolute constant $C>0$, see~\cite{esseen1958mean,goldstein2010bounds}.  Combining these bounds we obtain 
\begin{equation}\label{eq:1stbd}
\P\big(S_n \in [-\kappa,\kappa]^2\big) \le \P\big(G \in [-\kappa,\kappa]^2\big) + C \varepsilon + \frac{2C}{\varepsilon\sqrt{n}} \,.
\end{equation}
We obtain the bound $1/n^{1/4}$ by taking $\varepsilon = 1/n^{1/4}$. 

Let us now improve the convergence rate $1/n^{1/4}$ obtained above to $(\log n) / \sqrt{n}$. We  use a stronger result of~\cite{rio2009upper} on a Wasserstein CLT in $\psi$-Orliz norm with $\psi(x) = e^{|x|}-1$: Define the $\psi$-Orliz norm  of a real-valued random variable $X$ as 
\[\|X\|_{\psi} = \inf\{a > 0 \,:\, \E\psi(X/a) \le 1\} \, .\]
This is also known as the \emph{sub-exponential norm} of the random variable $X$~\cite{vershynin2018high}.
Define the Wasserstein-$\psi$ distance between two r.v.'s $X$ and $Y$ as
\[ W_{\psi}(X,Y) = \inf \,\|X-Y\|_{\psi} \, ,\]
where the infimum is taken over all couplings of $X$ and $Y$.  
Now we follow the same argument as before and replace the bound~\eqref{eq:tailbd} by the following improved bound
\begin{align*}
\P\big(\|S_n-G\|_{\infty} > \varepsilon \big) &\le \P\big(|S_{n,1}-G_1| > \varepsilon \big) + \P\big(|S_{n,2}-G_2| > \varepsilon \big) \\
&\le 2 e^{-\varepsilon/a} \E \big[e^{|S_{n,1}-G_1|/a}\big] \, ,
\end{align*}
valid for any $a>0$. We take $a = \|S_{n,1}-G_1\|_{\psi}$ so that $\E \big[e^{|S_{n,1}-G_1|/a}\big] -1\le 1$ by definition of the $\psi$-Orliz norm and obtain 
\[\P\big(\|S_n-G\|_{\infty} > \varepsilon \big) \le 4 e^{-\varepsilon/\|S_{n,1}-G_1\|_{\psi}}\,.\]
We now choose the optimal coupling achieving the $W_\psi$ distance (this turns out to be the increasing rearrangement mentioned above), and use the univariate CLT for $W_\psi$ in~\cite[Theorem 2.1]{rio2009upper}: 
\begin{equation*}
W_\psi(S_{n,1},G_1) \le \frac{C}{\sqrt{n}} \, ,
\end{equation*}
to obtain the bound 
\begin{equation}\label{eq:2ndbd}
\P\big(S_n \in [-\kappa,\kappa]^2\big) \le \P\big(G \in [-\kappa,\kappa]^2\big) + C \varepsilon +  4 e^{-\varepsilon \sqrt{n}/C} \,,
\end{equation}
to be compared with~\eqref{eq:1stbd}. Now we can take $\varepsilon = C (\log n) / \sqrt{n}$ to conclude. 
\end{proof}

\section{Consequences for stable algorithms and circuits}\label{section:Stable-circuits}
Let  $\ALG_n :    \R^{m \times n} \times \Omega \to \{-1,+1\}^n$ be a family of randomized algorithms taking the matrix $\bG \in \R^{m \times n}$ with $m = \lfloor\alpha n\rfloor$ and an independent random seed $\omega \in \Omega$ as input. The margin $\kappa>0$ is also an input to the algorithm, but we omit it from our notation since it will be considered fixed throughout. 

\begin{definition}\label{def:stable}
 Let $\mu^{\salg}_{\bG}$ be the law of the output of the algorithm $\bx^{\salg} = \ALG_n(\bG , \omega) \in \{-1,+1\}^n$, conditional on $\bG$: $\mu^{\salg}_{\bG} := \Law(\bx^{\salg} \,|\,\bG)$.  
For a sequence $t_n \to 0$, we say that  $(\ALG_n)_{n \ge 1}$ is $t_n$-stable if 
\begin{equation}
\lim_{n \to \infty} \, \frac{1}{\sqrt{n}} W_{2}\big(\mu^{\salg}_{\bG} , \mu^{\salg}_{\bG_{t_n}}\big) = 0 \,  ~~~\mbox{in prob.}
\end{equation}
\end{definition}
A straightforward consequence of this definition, as observed in~\cite{alaoui2022sampling}, is that $t_n$-stable algorithms cannot approximately sample from a distribution exhibiting transport disorder chaos with the same parameter $t_n$, in the following sense: 
\begin{lemma}\label{lem:impossibility}
If the pairs $(\mu_{\bG},\mu_{\bG_{t_n}})_{n \ge 1}$ satisfy the conclusion of Theorem~\ref{thm:w_2-chaos} and $(\ALG_n)_{n \ge 1}$ is $t_n$-stable, then
\begin{equation}
\E W_2( \mu_{\bG}, \mu^{\salg}_{\bG}) \ge \sqrt{\delta n}\, ,   
\end{equation}
for some $\delta>0$.
\end{lemma}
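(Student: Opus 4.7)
The proof is a straightforward triangle inequality argument, combined with the observation that marginally $\bG$ and $\bG_{t_n}$ have the same distribution. The plan is as follows.

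First, by the triangle inequality for $W_2$, for every realization of $\bG$ and $\bG_{t_n}$ we have
\begin{equation*}
W_2(\mu_{\bG},\mu_{\bG_{t_n}}) \,\le\, W_2(\mu_{\bG},\mu^{\salg}_{\bG}) + W_2(\mu^{\salg}_{\bG},\mu^{\salg}_{\bG_{t_n}}) + W_2(\mu^{\salg}_{\bG_{t_n}},\mu_{\bG_{t_n}})\, .
\end{equation*}
Let $A_n = \{W_2(\mu_{\bG},\mu_{\bG_{t_n}}) \ge \sqrt{\delta n}\}$, so that by Theorem~\ref{thm:w_2-chaos} we have $\P(A_n) \to 1$. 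Multiplying the previous display by $\indi_{A_n}$ and taking expectations yields
\begin{equation*}
\sqrt{\delta n}\,\P(A_n) \,\le\, \E W_2(\mu_{\bG},\mu^{\salg}_{\bG}) + \E W_2(\mu^{\salg}_{\bG},\mu^{\salg}_{\bG_{t_n}}) + \E W_2(\mu^{\salg}_{\bG_{t_n}},\mu_{\bG_{t_n}})\, .
\end{equation*}

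Next, I would use two elementary facts to simplify the right-hand side. By construction, $\bG$ and $\bG_{t_n}$ have the same marginal distribution, so the joint laws of $(\bG,\mu_{\bG},\mu^{\salg}_{\bG})$ and $(\bG_{t_n},\mu_{\bG_{t_n}},\mu^{\salg}_{\bG_{t_n}})$ coincide; hence the first and third terms above are equal. For the middle term, stability of $(\ALG_n)$ gives $W_2(\mu^{\salg}_{\bG},\mu^{\salg}_{\bG_{t_n}})/\sqrt{n}\to 0$ in probability, and since both measures are supported on $\{-1,+1\}^n$ this random variable is deterministically bounded by $2$, so the convergence upgrades to convergence in expectation: $\E W_2(\mu^{\salg}_{\bG},\mu^{\salg}_{\bG_{t_n}}) = o(\sqrt{n})$.

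Substituting back, we obtain
\begin{equation*}
2\,\E W_2(\mu_{\bG},\mu^{\salg}_{\bG}) \,\ge\, \sqrt{\delta n}\,\P(A_n) - o(\sqrt{n}) \,=\, \sqrt{\delta n}\,(1-o(1))\, ,
\end{equation*}
which yields $\E W_2(\mu_{\bG},\mu^{\salg}_{\bG}) \ge \sqrt{\delta' n}$ for any $\delta' < \delta/4$ and all $n$ large enough, proving the claim. There is no real obstacle here: the only point requiring a touch of care is the passage from stability (convergence in probability) to a statement about expectations, which is handled by the trivial deterministic bound $W_2(\mu,\nu) \le 2\sqrt{n}$ for measures on the hypercube.
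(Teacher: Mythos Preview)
Your proof is correct and follows exactly the same approach as the paper's: the triangle inequality for $W_2$, together with the fact that $\bG$ and $\bG_{t_n}$ have the same marginal law so the first and third terms are identically distributed. You supply more detail than the paper (the indicator-and-expectation argument and the bounded-convergence upgrade from stability in probability to stability in expectation), but the underlying idea is the same.
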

\begin{proof}
This follows from the triangle inequality
\[
W_2(\mu_{\bG}, \mu_{\bG_t}) \le W_2( \mu_{\bG}, \mu^{\salg}_{\bG}) + W_2( \mu^{\salg}_{\bG}, \mu^{\salg}_{\bG_t}) + W_2(\mu^{\salg}_{\bG_t}, \mu_{\bG_t})\,,
\]
for any $t$, and the fact that the first and third terms on the right-hand side have the same distribution.   
\end{proof}

Many known algorithmic schemes are stable as per the above definition. Proving that a sampling algorithm $\ALG_n$ is $t_n$-stable for some $t_n$ usually reduces to showing that the map $\bG \mapsto \ALG_n(\bG,\omega)$ is suitably Lipschitz with a constant independent of $n$. In the case of iterative algorithms the Lipschitz constant will usually depend on the number of iterations. As already mentioned in the introduction, it was shown, e.g., in~\cite[Proposition 5.2]{alaoui2022sampling} that the class of iterative algorithms using Lipschitz non-linearities is $t_n$-stable for some $t_n \to 0$ when ran for a constant number of iterations.

Another class of approximate sampling algorithms we are especially interested in this paper are Boolean circuits of small depth and small size. We will show that circuits are $t_n$-stable for some $t_n \to 0$ depending on the depth and size.  

\paragraph{Boolean circuits}
We begin by recalling some properties of Boolean circuits with controlled sizes and depth. Let $\Sigma_r=\{-1,+1\}^r$. Fix any $N,n$ and consider
a Boolean circuit $\cC:\Sigma_N\to \Sigma_n$. We write $C_1,\ldots,C_n: \Sigma_N\to \{-1,+1\}$ for the coordinates of the circuit associated with each of the $n$
output nodes. 
Let $s(\cC)$ denote the size of the circuit, namely the number of gates and let $d(\cC)$ denote the depth of the circuit which is the length of the longest path from the input  layer to the output layer.

Let $m,n$ be as in our setting of symmetric perceptron model with Rademacher disorder. Fix $R$ with $\omega=(\omega_i, i\in [R])\in \Sigma_R$ representing the random seeds. 
We consider a Boolean circuit $\cC: \Sigma_{mn+R}\to \Sigma_n$ where the first $mn$ entries are the entries of the matrix $\bG \in \{-1,+1\}^{m \times n}$ representing the instance of the perceptron model, and the remaining entries $\omega = (\omega_i, i\in [R])$ are the random seeds i.i.d.\ Rademacher, independently generated from $\bG$. The total input is described as a pair $(\bG,\omega)$.

\begin{theorem}\label{theorem:stability-boolean-circuits}
Fix $\alpha>0$, $m= \lfloor\alpha n\rfloor$ and $R>0$ and let $\ALG_n = \cC : \Sigma_{mn+R} \to \Sigma_n $ be a Boolean circuit. 
For every $\gamma\in (0,1)$ there exist $\beta,A>0$ such that 
$(\ALG_n)_{n \ge 1}$ is $t_n$-stable 
as per Definition~\ref{def:stable} for any $t_n \ge 1-n^{-1+\gamma}$, where both $\bG$ and $\bG_t$ are Rademacher with entrywise correlation equal to $t_n$, provided that at least
 one of the following conditions on 
the depth and size of the circuit holds:
\begin{enumerate}
\item Polynomial size and (sub)logarithmic depth: $s(\cC) \le n^A$,  $d(\cC) \le \beta \log n / \log \log n$,
\item Sub-exponential size and bounded depth: $s(\cC) \le \exp(n^{\beta})$, $d(\cC) \le A$. 
\end{enumerate}
\end{theorem}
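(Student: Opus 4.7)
The plan is to apply the Linial--Mansour--Nisan (LMN) theorem coordinate-wise to bound the average noise sensitivity of each output gate of the circuit, and then convert this into a Wasserstein-2 bound via the identity $\|x-y\|_2^2 = 4\, d_H(x,y)$ for $x,y\in\{-1,+1\}^n$. The natural coupling is to reuse the same random seed $\omega$ and construct $\bG_{t_n}$ from $\bG$ by resampling each entry independently with probability $1-t_n$: then $(\bG,\omega)$ and $(\bG_{t_n},\omega)$ are jointly Rademacher vectors in $\Sigma_{mn+R}$ with entrywise correlation $t_n$ on the first $mn$ coordinates and $1$ on the last $R$. Writing $\ALG_n(\bG,\omega) = (C_1(\bG,\omega),\dots,C_n(\bG,\omega))$ with each $C_i$ Boolean, this coupling gives
\[
\E\bigl[W_2^2(\mu^{\salg}_{\bG}, \mu^{\salg}_{\bG_{t_n}})\bigr] \le 4 \sum_{i=1}^n \P\bigl[C_i(\bG,\omega) \ne C_i(\bG_{t_n},\omega)\bigr],
\]
reducing the task to bounding the noise sensitivity of each $C_i$ at correlation $t_n$.

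For any $f:\Sigma_N\to\{-1,+1\}$ and $\rho\in[0,1]$, the Fourier expansion together with the elementary bound $1-\rho^{|S|} \le k(1-\rho)$ for $|S|\le k$ gives
\[
\P\bigl[f(X)\ne f(X')\bigr] = \tfrac12 \sum_S (1-\rho^{|S|})\hat f(S)^2 \le \tfrac12\bigl(k(1-\rho) + W^{>k}[f]\bigr),
\]
where $W^{>k}[f] := \sum_{|S|>k}\hat f(S)^2$ and $(X,X')$ is the $\rho$-correlated Rademacher pair. The LMN theorem yields $W^{>k}[f] \le 2 s\, e^{-c\, k^{1/d}}$ for any $f$ computed by a bounded-depth, unbounded-fan-in Boolean circuit of depth $d$ and size $s$, with an absolute constant $c>0$. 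Applied to each $C_i$ with $\rho=t_n$ and summed over $i$,
\[
\E\bigl[W_2^2(\mu^{\salg}_{\bG}, \mu^{\salg}_{\bG_{t_n}})\bigr] \le 2n \bigl(k(1-t_n) + 2\,s(\cC)\,e^{-c\, k^{1/d(\cC)}}\bigr),
\]
so Markov's inequality reduces the proof to choosing a truncation level $k=k_n$ making this right-hand side $o(n)$.

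The hypothesis $1-t_n \le n^{-1+\gamma}$ forces $k_n = o(n^{1-\gamma})$ to kill the first term, while $c\,k_n^{1/d(\cC)} - \log s(\cC) \to \infty$ is needed to kill the LMN tail. In regime (1), $s(\cC)\le n^A$ and $d(\cC)\le \beta\log n/\log\log n$, so $k_n = (C\log n)^{d(\cC)} = n^{\beta(1+o(1))}$ suffices, and this is $o(n^{1-\gamma})$ as long as $\beta<1-\gamma$; any $A>0$ is admissible here. In regime (2), $s(\cC)\le \exp(n^{\beta})$ and $d(\cC)\le A$, so $k_n \asymp n^{\beta A}$ suffices, which is $o(n^{1-\gamma})$ provided $\beta A<1-\gamma$. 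For instance, $A=1$ and $\beta = (1-\gamma)/2$ satisfy both constraints simultaneously. In either regime one obtains $\E[W_2^2(\mu^{\salg}_\bG,\mu^{\salg}_{\bG_{t_n}})] = o(n)$ and Markov yields $t_n$-stability.

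The main (mild) obstacle is the parameter bookkeeping required to balance the two terms in the noise-sensitivity bound against the two circuit regimes, in particular verifying that LMN's doubly-exponential dependence on depth $d$ is compatible with the window $(\log n)^d \ll n^{1-\gamma}$ in regime (1), and that $n^{\beta A} \ll n^{1-\gamma}$ in regime (2). A conceptual point worth noting is that LMN provides only \emph{average} noise sensitivity (in terms of $L^2$ Fourier weight) rather than the uniform-in-input Lipschitz property used, e.g., in~\cite[Proposition 5.2]{alaoui2022sampling} for iterative algorithms; this averaged form is nonetheless sufficient because Definition~\ref{def:stable} only asks for convergence in probability of the Wasserstein-2 distance rescaled by $\sqrt n$, which is exactly what Markov delivers from an expectation bound.
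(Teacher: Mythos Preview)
Your argument is correct and follows essentially the same overall strategy as the paper: bound the expected squared $\ell_2$ change in the circuit's output under resampling via LMN, then conclude $t_n$-stability by Markov. The one small imprecision is that your noise-stability identity $\P[f(X)\ne f(X')]=\tfrac12\sum_S(1-\rho^{|S|})\hat f(S)^2$ is stated for a uniformly $\rho$-correlated pair, whereas here only the first $mn$ coordinates are resampled; the correct expression has $t_n^{|S\cap[mn]|}$ in place of $\rho^{|S|}$, but since $1-t_n^{|S\cap[mn]|}\le 1-t_n^{|S|}$ your upper bound goes through unchanged.

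Where you differ from the paper is in the mechanism used to pass from the LMN spectral concentration to the sensitivity bound. The paper bounds the \emph{total influence} $I(C_j)=\sum_S|S|\hat C_{S,j}^2$ by $D+N\delta$ via LMN, then converts this into a bound on $\E\|{\cC}(X)-{\cC}(X_t)\|_2^2$ by a telescoping argument over the random set of flipped coordinates, obtaining essentially $(1-t)\sum_j I(C_j)$. You instead use the noise-operator identity directly, splitting the Fourier spectrum at level $k$ and bounding $1-t_n^{|S|}\le k(1-t_n)$ on the low-degree part. Both routes arrive at a bound of the shape $n\cdot\bigl(k(1-t_n)+\text{LMN tail at level }k\bigr)$ and the ensuing parameter analysis is identical. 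Your approach is a bit shorter and avoids the telescoping step; the paper's approach has the minor conceptual advantage of isolating the influence bound as a standalone inequality.
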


We note that the dependence on $R$ in the above statements is  implicit since $mn + R$ is a trivial lower bound on the circuit size $s({\cC})$. 
An immediate corollary is the following:
\begin{corollary}
Boolean circuits under either of the above depth and size constraints cannot approximately sample from $\mu_{\bG}$, in the sense of Lemma~\ref{lem:impossibility}
\end{corollary}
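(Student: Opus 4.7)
The plan is to transfer the average noise-insensitivity of bounded-depth Boolean circuits, quantified by the Linial--Mansour--Nisan (LMN) theorem, into Wasserstein-$2$ stability via the natural coordinate-wise coupling. The LMN ingredient states that for any $f:\{-1,+1\}^N\to\{-1,+1\}$ computed by a depth-$d$, size-$s$ circuit, $\sum_{|U|>k}\hat f(U)^2 \le 2s\cdot 2^{-k^{1/d}/20}$; this is essentially the only external input beyond routine Fourier analysis.

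First I couple $\mu^{\salg}_{\bG}$ and $\mu^{\salg}_{\bG_t}$ by sharing the random seed $\omega$ and applying the entry-wise $t$-correlated coupling between $\bG$ and $\bG_t$. Since the outputs lie in $\{-1,+1\}^n$, $\|\cC(\bG,\omega)-\cC(\bG_t,\omega)\|_2^2 = 4\,d_H(\cC(\bG,\omega),\cC(\bG_t,\omega))$, so
\[
W_2^2\big(\mu^{\salg}_{\bG},\mu^{\salg}_{\bG_t}\big) \;\le\; 2\sum_{i=1}^n\big(1-\E[C_i(\bG,\omega)C_i(\bG_t,\omega)]\big).
\]
Expanding each coordinate $C_i:\{-1,+1\}^{mn+R}\to\{-1,+1\}$ in the Fourier--Walsh basis and using that the $mn$ disorder coordinates are $t$-correlated while the $R$ seed coordinates are identical (correlation $1$),
\[
\E[C_i(\bG,\omega)C_i(\bG_t,\omega)] \;=\; \sum_{U\subseteq [mn+R]} t^{|U\cap [mn]|}\,\hat C_i(U)^2.
\]
Splitting the Fourier sum at a threshold $k$, and using $1-t^{|U\cap[mn]|}\le k(1-t)$ when $|U|\le k$, Parseval for the low-frequency part, and LMN for the tail, yields
\[
1-\E[C_iC_i']\;\le\; k(1-t) + 2\,s(\cC)\cdot 2^{-k^{1/d(\cC)}/20}.
\]
Summing over $i$ and using $1-t_n \le n^{-1+\gamma}$ gives
\[
\tfrac{1}{n}\,W_2^2\big(\mu^{\salg}_{\bG},\mu^{\salg}_{\bG_{t_n}}\big) \;\le\; 2 k\,n^{-1+\gamma} + 4\,s(\cC)\cdot 2^{-k^{1/d(\cC)}/20}.
\]

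It remains to tune $k$ in each regime. For Case~1, with $s(\cC)\le n^A$ and $d(\cC)\le \beta\log n/\log\log n$, any $\beta<1-\gamma$ (and arbitrary $A$) works: choose $\beta<\beta'<1-\gamma$ and $k=n^{\beta'}$, so that $k^{1/d}=(\log n)^{\beta'/\beta}$ eventually exceeds every multiple of $\log n$, making the LMN term decay superpolynomially, while $k\,n^{-1+\gamma}=n^{\beta'+\gamma-1}\to 0$. For Case~2, with $s(\cC)\le e^{n^\beta}$ and $d(\cC)\le A$, any $\beta<(1-\gamma)/A$ works: choose $\beta<\beta'<(1-\gamma)/A$ and $k=n^{A\beta'}$, so $k^{1/A}=n^{\beta'}$ and $s\cdot 2^{-k^{1/A}/20}\le \exp(n^\beta-c\,n^{\beta'})\to 0$, while $k\,n^{-1+\gamma}=n^{A\beta'+\gamma-1}\to 0$. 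Choosing, for example, $\beta=(1-\gamma)/2$ and $A=1$ satisfies both conditions simultaneously. The main conceptual step is recognizing that \emph{average} Fourier concentration on low levels (LMN-style) suffices for $W_2$-stability of a Boolean-valued algorithm --- a weaker property than the uniform Lipschitz condition used in the continuous setting. The only delicate point is the parameter balance: the LMN threshold $k^{1/d}$ must beat $\log s$ by an $\omega(\log n)$ margin to absorb the factor of $n$ from summing over output coordinates, while $k\,(1-t_n)$ stays $o(1)$; both conditions hold comfortably in the regimes stated.
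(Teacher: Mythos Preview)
Your argument is essentially a (correct) proof of the stability Theorem~\ref{theorem:stability-boolean-circuits}, not of the Corollary as such. In the paper the Corollary is a three-line deduction: Theorem~\ref{theorem:stability-boolean-circuits} gives $t_n$-stability for any $t_n\ge 1-n^{-1+\gamma}$; Theorem~\ref{thm:w_2-chaos} gives transport disorder chaos for Rademacher disorder whenever $\sqrt{n(1-t_n)}\ge C\log n$; these two ranges overlap (take, e.g., $1-t_n=n^{-1+\gamma}$ for any fixed $\gamma\in(0,1)$), so Lemma~\ref{lem:impossibility} applies. You never close this loop: you do not invoke Theorem~\ref{thm:w_2-chaos}, do not check that a common $t_n$ exists, and do not cite Lemma~\ref{lem:impossibility}. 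This is easy to fix, but as written your proposal stops at stability and does not actually reach the Corollary's conclusion.

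On the stability argument itself, your route is genuinely different from the paper's and somewhat cleaner. The paper bounds the total influence $I(C_j)=\sum_S |S|\hat C_{j,S}^2$ via LMN, then telescopes $C_j(X)-C_j(X_t)$ over a random sequence of single-coordinate flips and applies a union bound. You instead compute the noise-stability correlation $\E[C_i(X)C_i(X_t)]=\sum_U t^{|U\cap[mn]|}\hat C_i(U)^2$ directly, split at level $k$, use $1-t^j\le j(1-t)$ below $k$ and LMN above $k$. This avoids the telescoping entirely and gives the same parameter tradeoffs. One small correction: your displayed inequality $W_2^2(\mu^{\salg}_{\bG},\mu^{\salg}_{\bG_t})\le 2\sum_i(1-\E[C_iC_i'])$ is really a bound on $\E_{\bG,\bG_t}W_2^2$, since the right-hand side averages over the disorder as well as the seed; you then need Markov to pass to convergence in probability as required by Definition~\ref{def:stable}.
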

\begin{proof}
Indeed, Boolean circuits are $t_n$-stable if $\sqrt{n(1-t_n)} \le n^{\gamma/2}$ as per Theorem~\ref{theorem:stability-boolean-circuits}, and we recall that the pair $(\mu_{\bG},\mu_{\bG_{t_n}})$ satisfies the conclusion of Theorem~\ref{thm:w_2-chaos}, i.e., has transport disorder chaos, with Rademacher disorder if $\sqrt{n(1-t_n)} \ge C \log n$. The two conditions are compatible.    
\end{proof}

Theorem~\ref{theorem:stability-boolean-circuits} is a consequence of a more precise stability bound for Boolean circuits given below.

\begin{theorem}\label{theorem:Boolean-circuit-insensitivity}
For every $c,\beta,\gamma>0$ satisfying  $0<\beta+\gamma<c<1$ the following holds for all  $A>0$ and all sufficiently large $n$. 
Let $\cC : \Sigma_{mn+R} \to \Sigma_n $ be a Boolean circuit with $d(\cC)\le \beta \log n/\log\log n$ and $s(\cC)\le n^A$. Suppose further that  
 \begin{align}\label{eq:gamma}
t\ge 1-n^{-1+\gamma}.
 \end{align}
 Let $X=(\bG,\omega),X_t=(\bG_t,\omega)$.
 Then  
\begin{align}\label{eq:C-change}
\E{ \|{\cC}(X_t) - {\cC}(X)\|_2^2} \le n^{c}.
\end{align}
Similarly, fix  $c,A>0$ and $\gamma<c<1$. Suppose  $0<\beta A+\gamma<c$. Then~\eqref{eq:C-change} 
holds  for all sufficiently large $n$, provided 
 $s({\cC})\le \exp(n^{\beta})$, $d({\cC})\le A$ and $t$ satisfying~\eqref{eq:gamma}.
\end{theorem}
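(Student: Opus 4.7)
The approach is to combine the Fourier expansion of each output coordinate of $\cC$ with the Linial-Mansour-Nisan (LMN) bound on the high-frequency Fourier mass of bounded-depth Boolean circuits. Writing $\cC=(C_1,\dots,C_n)$ with each $C_i:\Sigma_{mn+R}\to\{-1,+1\}$, I would first decompose
\[
\E\|\cC(X_t)-\cC(X)\|_2^2 \;=\; \sum_{i=1}^n \E\bigl(C_i(X_t)-C_i(X)\bigr)^2,
\]
so it suffices to bound the noise sensitivity of a single coordinate $C_i$. Expanding $C_i=\sum_{S\subseteq [mn+R]} \widehat{C_i}(S)\chi_S$ in the Rademacher-Fourier basis, and noting that only the $mn$ bits encoding $\bG$ are independently resampled with probability $1-t$ whereas the $R$ seed bits are unchanged, one has $\E[\chi_S(X)\chi_S(X_t)]=t^{|S\cap[mn]|}\ge t^{|S|}$. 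Parseval, combined with the elementary bounds $1-t^{|S|}\le |S|(1-t)$ for $|S|\le k$ and $1-t^{|S|}\le 1$ otherwise, yields
\[
\E\bigl(C_i(X_t)-C_i(X)\bigr)^2 \;\le\; 2k(1-t) \;+\; 2\!\!\sum_{|S|>k}\widehat{C_i}(S)^2,
\]
for any truncation level $k\in\mathbb{N}$.

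Next I would invoke LMN (see \cite[Theorem 4.35]{o2014analysis}): a Boolean function computed by a circuit of depth $d$ and size $s$ satisfies $\sum_{|S|>k}\widehat{C_i}(S)^2 \le 2s\cdot 2^{-k^{1/d}/20}$. Summing the previous display over the $n$ output coordinates,
\[
\E\|\cC(X_t)-\cC(X)\|_2^2 \;\le\; 2nk(1-t) \;+\; 4ns\cdot 2^{-k^{1/d}/20}.
\]
Using $1-t\le n^{-1+\gamma}$ from (\ref{eq:gamma}), the first term is at most $2k n^{\gamma}$, so making it $\le n^c/2$ requires $k\le n^{c-\gamma}/4$. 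For the tail term to be $\le n^c/2$ it suffices that $k^{1/d}\ge 20\log_2(8ns/n^c)$, i.e., $k\ge \bigl(20\log_2(8ns/n^c)\bigr)^d$.

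Finally I would specialize to each regime. In the sub-logarithmic depth case $d\le\beta\log n/\log\log n$, $s\le n^A$, one has $\bigl(20\log_2(8ns/n^c)\bigr)^d = \exp\bigl(d\,(\log\log n+O(1))\bigr) = n^{\beta+o(1)}$, so any $k$ in the window $[n^{\beta+o(1)},\,n^{c-\gamma}/4]$ works; this window is nonempty for large $n$ precisely under the hypothesis $\beta+\gamma<c$. In the bounded-depth sub-exponential size case $d\le A$, $s\le \exp(n^\beta)$, the LMN constraint becomes $k\ge \bigl(20\log_2(8ns/n^c)\bigr)^A$, which is of order $n^{\beta A}$ up to constants and is therefore compatible with $k\le n^{c-\gamma}/4$ for large $n$ exactly when $\beta A<c-\gamma$, matching $\beta<(c-\gamma)/A$. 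The principal (and essentially only) obstacle is this quantitative balancing of exponents: one must verify that the lower-order corrections in the LMN truncation threshold do not violate the strict inequalities $\beta+\gamma<c$ and $\beta A<c-\gamma$, which is the reason the conclusion is stated only for sufficiently large $n$.
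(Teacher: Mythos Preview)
Your proof is correct and essentially equivalent in strength to the paper's, but you take a cleaner route. The paper first bounds the total influence $I(C_j)=\sum_S |S|\,\widehat{C}_j(S)^2$ via LMN (obtaining $I(C_j)\le D+N\delta$ for a truncation level $D$ defined implicitly through the desired tail $\delta$), and then converts this into a sensitivity bound by writing the passage $X\mapsto X_t$ as a random sequence of single-bit flips and telescoping, which yields $\E\|\cC(X)-\cC(X_t)\|_2^2 \le 2(D+N\delta)\,n(1-t)$. You instead use the noise-stability identity $\E(C_i(X_t)-C_i(X))^2 = 2\sum_S \widehat{C_i}(S)^2\bigl(1-t^{|S\cap[mn]|}\bigr)$ directly, bound $1-t^{|S|}\le |S|(1-t)$ below the truncation and by $1$ above, and invoke LMN in its explicit tail form $\sum_{|S|>k}\widehat{C_i}(S)^2\le 2s\cdot 2^{-k^{1/d}/20}$. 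This bypasses both the influence bookkeeping and the path argument, and also avoids the extra $N\delta$ error term entirely. The endgame---choosing the truncation level so that the low-degree term is $\le n^{c-\gamma}$-scaled while the LMN tail is negligible---is carried out identically in both proofs, and your exponent balancing in the two regimes is correct and matches the paper's. The two LMN formulations you and the paper use are equivalent up to constants, so there is no hidden discrepancy there.
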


The way to read this claim in a ``friendly" way is as follows. Suppose we want to guarantee algorithmic
$t_n$-stability with any fixed stability parameter $\gamma\in (0,1)$ (so that $t_n=1-n^{-1+\gamma})$.
We then fix any $c<1$ such that $\gamma<c$, and find $\beta$ small enough so that $\beta<c-\gamma$. 
Then per theorem above, poly-size circuits with depth at most $\beta\log n/\log\log n$ are $t_n$-stable, since $\E [W_2(\mu^{\salg}_{\bG},\mu^{\salg}_{\bG_t})^2]$ is upper-bounded by the left-hand side of Eq.~\eqref{eq:C-change} 

Similarly, fixing $A>0$ arbitrarily and letting $\beta$ be small enough so that $\beta<(c-\gamma)/A$, we obtain
that the circuit is $t_n$-stable (again with parameter $\gamma$), provided the circuit has depth at most $A$ and size at most $\exp(n^\beta)$. Therefore Theorem~\ref{theorem:Boolean-circuit-insensitivity} implies Theorem~\ref{theorem:stability-boolean-circuits}.

\begin{proof}[Proof of Theorem~\ref{theorem:Boolean-circuit-insensitivity}]

Let us briefly recall the following notions from Boolean analysis/Fourier analysis on $\{-1,+1\}^N$; see e.g.~\cite{o2014analysis} for a reference.
Consider the standard Fourier expansion of functions on $\{-1,+1\}^{N}$ associated with the
uniform measure on $\{-1,+1\}^N$. The basis of this expansion are monomials of the form
$x_S\triangleq \prod_{i\in S}x_i,\, S\subset [N]$. For every function $g:\{-1,+1\}^{N}\to \R$, the associated
Fourier coefficients are 
\begin{align*}
\hat g_S=\E[g(X) X_S], \qquad S\subseteq [N],
\end{align*}
where $X$  a uniformly random vector in $\{-1,+1\}^N$. 
Then the Fourier expansion of $g$ is 
\begin{equation}\label{eq:fourier}
g(x)=\sum_{S\subseteq [N]} \hat g_S x_S\,,
\end{equation}
and the Parseval (or Walsh) identity states that 
\begin{equation}\label{eq:parseval}
\E[g(X)^2] = \sum_{S \subseteq [N]} \hat{g}_S^2\,.
\end{equation}

Let $X_t \in \{-1,+1\}^N$ be a random vector obtained by independently resampling every coordinate of $X$ with probability $1-t$, so that for each $i \in [N]$, $X_i$ and $(X_{t})_i$ have correlation $t$. Then applying~\eqref{eq:fourier} yields
\begin{equation}\label{eq:corr}
\E \big[g(X)g(X_t)\big] = \sum_{S\subseteq [N]} t^{|S|} \hat{g}_S^2 \,.
\end{equation}

We now consider the functions $C_j$  (the $j$th component of ${\cC}$, $1 \le j \le n$) as functions of the first $N = mn$ coordinates of $X$ corresponding to the matrix $\bG$; the random seed $\omega$ being fixed throughout the analysis.
We fix  $\delta>0$ and let $\hat C_{S,j}, S\subseteq [N]$ be the Fourier coefficients of the function $C_j$. Fix a constant $c_0>0$ and let
\begin{align}
D &=\log(1/\delta)\left(c_0\log {s({\cC})\over \delta}\right)^{d({\cC})-1} \label{eq:bound-Dn}. 
\end{align}
The Linial-Mansour-Nisan Theorem~\cite[page 106]{o2014analysis}
 states that for some choice of the universal constant $c_0$ the spectrum of a depth-$d$ circuit with size $s$
is $\delta$-concentrated on degree $\le D$. More precisely, in our context, it states that for each $j=1,2,\ldots,n$,
\begin{align*}
\sum_{S\subseteq [N] \,:\, |S|> D}(\hat C_{S,j})^2\le \delta\, .
\end{align*} 

Since $X$ and $X_t$ are equally distributed, using~\eqref{eq:parseval} and~\eqref{eq:corr} we have 
\begin{align}
\E[ (C_j(X)-C_j(X_t))^2] &=  2\big(\E[ C_j(X)^2] - \E[ C_j(X)C_j(X_t)]\big)\notag\\
&= 2\sum_{S\subseteq [N]} (1-t^{|S|}) (\hat C_{S,j})^2\notag\\
&\le 2 (1-t^D)\sum_{S \,:\, |S|\le D}  (\hat C_{S,j})^2 + 2 \delta \notag\\
&\le  2 (1-t^D) + 2\delta\,.\label{eq:Delta}
\end{align}
where we applied the LMN theorem as stated above in the second-to-last last line and appropriately bounded the term $1-t^{|S|}$ depending on whether $|S|$ is larger or smaller than $D$. The last line follows by the fact $\sum_{S \subseteq [N]} \hat C_{S,j}^2 = \E{C_j(X)^2} =1$ since $C_j(X) \in \{-1,+1\}$.

Summing over $j\in [n]$ and using~\eqref{eq:Delta} we obtain
\begin{align*}
\E{ \|C(X)-C(X_t)\|_2^2 }\le 2n(1-t^D)+2n\delta \, .
\end{align*}
We now set $\delta=n^{c-1}$, $c<1$. 
The second term in the above right-hand side is bounded by $2n^c$, and from Eq.~\eqref{eq:bound-Dn}  we obtain
\begin{align*}
D &\le (1-c)(\log n)\left(c_0(A+1-c)\log n\right)^{d({\cC})-1}\\
&\le c'  \left(c_0(A+1-c)\log n\right)^{d({\cC})}\,,
\end{align*}
for some constant $c'>0$ depending on $A, c_0,c$. 
We further have
\begin{align*}
\left(\log n\right)^{d({\cC})} \le \left(\log n\right)^{\beta \log n/\log\log n}= n^{\beta},
\end{align*}
implying $D\le n^{\beta(1+o(1))}$.
Thus recalling our bound $1-t\le 1/n^{1-\gamma}$  we obtain
\begin{align*}
n(1-t^D)\le n(1-t) D = n^{\beta(1+o(1))}n^{\gamma}\, .
\end{align*}
Since $\beta+\gamma<c$ we have $Dn(1-t)=o(n^{c})$ and the first claimed bound $n^{c}$ is obtained for $n$ large enough.

To prove the second part recall bounds $s({\cC}) \le e^{n^\beta}$ and $d({\cC}) \le A$.    
We again let $\delta=n^{c-1}$. 
Then from  (\ref{eq:bound-Dn}) we obtain
\begin{align*}
D&\le
(1-c)(\log n)\left(c_0\log \left(s({\cC})n^{1-c}\right) \right)^{A-1} \\
&\le 
(1-c)(\log n)\left(c_0n^\beta+c_0(1-c)\log n \right)^{A-1}\, .
\end{align*}
Thus $D \le n^{\beta A(1+o(1))}$, implying
$n(1-t)D \le n^{(\beta A+\gamma)(1+o(1))}$, again due to our bound on $t$.
Since $\beta A+\gamma<c$ we obtain $Dn(1-t)=o(n^{c})$. This completes the proof of the theorem.
\end{proof}

\vspace{5mm}
\textbf{Acknowledgments.} The first author would like to thank Will Perkins for stimulating discussions in the early stages of this project. We thank the anonymous referees for their feedback and for suggesting a shorter proof of Theorem~\ref{theorem:Boolean-circuit-insensitivity}.     
The first author acknowledges the support of NSF grant DMS  2450867. 
The second author acknowledges the support of NSF grant CISE  2233897. 
\vspace{5mm}

 \newpage
\bibliographystyle{amsalpha}
 \bibliography{bibliography}

\newpage
\appendix

\section{The derivative of $q_{\kappa}(t)$}
\label{app:blah}
Here we show that
\[q_{\kappa}'(t) = \frac{1}{\pi} \frac{e^{-\kappa^2/2}}{\sqrt{1-t^2}} \left(e^{-\kappa^2(1-t)/2(1+t)} -e^{-\kappa^2(1+t)/2(1-t)}\right) \, .\]
Indeed, let $\phi$ denote standard normal density, $Z, Z' \sim N(0,1)$ i.i.d., and define for $0 \le t < 1$,
\[a_{\pm}(t) = \frac{\pm \kappa - tZ}{\sqrt{1-t^2}}\,.\]
We first have
\[a_{\pm}'(t) =  - \frac{ Z}{\sqrt{1-t^2}} +  \frac{(\pm \kappa  - tZ)t}{(1-t^2)^{3/2}}
=\frac{\pm \kappa t - Z}{(1-t^2)^{3/2}}\,.\]
Next,
\begin{align*}
q_{\kappa}'(t) &= \frac{\rmd }{\rmd t} \P\left( |Z| \le \kappa \, , |t Z + \sqrt{1-t^2} Z'| \le \kappa\right) \\
&= \frac{\rmd }{\rmd t} \E\Big[ \indi\{|Z| \le \kappa\} \int_{a_{-}(t)}^{a_{+}(t)} \phi(y) \rmd y \Big] \\
&= \E\Big[ \indi\{|Z| \le \kappa\} \big(a_{+}'(t) \phi(a_{+}(t)) - a_{-}'(t) \phi(a_{-}(t))\big)\Big] \\
&= \frac{1}{\sqrt{2\pi}(1-t^2)^{3/2}}\E\Big[ \indi\{|Z| \le \kappa\} \Big((\kappa t - Z)e^{-(\kappa - tZ)^2/2(1-t^2)} \\
&\hspace{5.7cm} + (\kappa t + Z) e^{-(\kappa + tZ)^2/2(1-t^2)} \Big)\Big]\, .
\end{align*}
Next we observe that the two terms inside the expectation are images of each other by a sign flip of $Z$, so 
\begin{align*}
q_{\kappa}'(t) 
&= \frac{2}{\sqrt{2\pi}(1-t^2)^{3/2}}\E\Big[ \indi\{|Z| \le \kappa\} (\kappa t - Z)e^{-(\kappa - tZ)^2/2(1-t^2)} \Big]\\
&= \frac{2}{\sqrt{2\pi}(1-t^2)^{3/2}} \int_{-\kappa}^{\kappa} (\kappa t - z) e^{-(\kappa - tz)^2/2(1-t^2) -z^2/2} \frac{\rmd z}{\sqrt{2\pi}}\\
&= \frac{2e^{-\kappa^2/2}}{\sqrt{2\pi}(1-t^2)^{3/2}}  \int_{-\kappa}^{\kappa} (\kappa t - z) e^{-(\kappa t- z)^2/2(1-t^2)} \frac{\rmd z}{\sqrt{2\pi}}\, ,
\end{align*}
where the last line follows from the identity $(\kappa  -tz)^2 + (1-t^2)z^2 = \kappa^2(1-t^2) + (\kappa t -z)^2$. 
By the change of variables $z' = (\kappa t - z)/\sqrt{1-t^2}$ we obtain
\begin{align*}
q_{\kappa}'(t) 
&=\frac{2e^{-\kappa^2/2}}{\sqrt{2\pi(1-t^2)}} \int_{-\kappa(1-t)/\sqrt{1-t^2}}^{\kappa(1+t)/\sqrt{1-t^2}} z' e^{-z'^2/2} \frac{\rmd z'}{\sqrt{2\pi}}\\
&=\frac{2e^{-\kappa^2/2}}{\sqrt{2\pi(1-t^2)}} \Big(\phi\Big(\kappa \sqrt{\frac{1-t}{1+t}}\Big)-\phi\Big(\kappa \sqrt{\frac{1+t}{1-t}}\Big)\Big)\, .
\end{align*}

\end{document}